\documentclass{article}

\usepackage{color}
\usepackage{cite,amsfonts,amssymb,amsmath,mathrsfs,textcomp,amsbsy,amsthm}

\usepackage{mathrsfs} 
\usepackage{enumitem,lineno}
\usepackage{graphicx}
\usepackage[T1]{fontenc}
\usepackage{amsfonts,amsmath,amssymb}
\usepackage{tikz,comment}
\usetikzlibrary{positioning, calc, trees, decorations.markings, patterns, arrows, arrows.meta}
\usepackage{diagmac2}
\usepackage[colorlinks=true,linkcolor=red,citecolor=cyan,urlcolor=blue,breaklinks=true]{hyperref}

\begin{document}

\newtheorem{Theorem}{Theorem}
\newtheorem{Lemma}{Lemma}
\newtheorem{Proposition}{Proposition}
\newtheorem{Corollary}{Corollary}
\newtheorem{Definition}{Definition}
\newtheorem{Example}{Example}
\newtheorem{Remark}{Remark}
\newtheorem{Conjecture}{Conjecture}

\begin{center}
{\large\bfseries Center of distances of ultrametric spaces generated by labeled trees}

\vspace{10pt}

Oleksiy Dovgoshey$^{1,2}$, Olga Rovenska$^{3,*}$

\vspace{6pt}

{\small
$^{1}$ Institute of Applied Mathematics and Mechanics of NASU, Slovyansk, Ukraine;  
\texttt{oleksiy.dovgoshey@gmail.com}\\
$^{2}$ University of Turku, Turku, Finland;  
\texttt{oleksiy.dovgoshey@utu.fi}\\
$^{3}$ Donbas State Engineering Academy, Kramatorsk, Ukraine;  
\texttt{rovenskaya.olga.math@gmail.com}
}

\end{center}

\vspace{10pt}

\noindent\textbf{Abstract.}  
 The center of distances of a metric space $(X,d)$ is the set
\(
C(X)\) of all $t\in \mathbb R^+$ for which the equation $d(x,p)=t$ has a solution for each $p\in X$.
We prove that the equalities $C(X)=\{0\}$ or
\(
C(X)=\{0,\operatorname{diam}X\}
\)
hold if $(X,d)$ is an ultrametric space generated by labeled trees.
The necessary and sufficient conditions under which $\operatorname{diam} X\in C(X)$ are found.

\vspace{8pt}

\noindent\textbf{Keywords:}  
center of distances; diametrical graph; labeled tree; ultrametric space.

\vspace{8pt}

\noindent\textbf{MSC (2020):} 54E35.

\section{Introduction}

Ultrametric spaces generated by non-negative vertex labelings on arbitrary trees were introduced in \cite{Dov2020TaAoG} and subsequently studied in \cite{DK2024DLPSSAG,DK2022LTGCCaDUS}.  
A characterization of totally bounded ultrametric spaces generated by labeled almost rays was provided in \cite{DV2025JMS}. Furthermore, \cite{DR2025USGbLSG} contains a metric characterization of ultrametric spaces generated by labeled star graphs.

It was shown in \cite{DCR2025OJAC} that the compact ultrametric spaces generated by labeled star graphs are the completions of totally bounded ultrametric space generated by decreasingly labeled rays.

In the present paper we investigate the class ${\bf UT}$ of all ultrametric spaces which are isometric to spaces generated by labeled trees. The main goal of the paper is the characterization of the center of distances of ${\bf UT}$-spaces $(X,d)$. 
The notion of center of distances was introduced in~paper \cite{BPW2018} and used for a generalization of John von~Neumann Theorem on permutations of two sequences with the same set of cluster points~\cite{Neumann1935}.  At the moment the authors are not aware of any results describing the structure of the center of distances in the case of ultrametric spaces.

The paper is organized as follows.
The next section contains some definitions and facts from the theory of ultrametric spaces and graph theory. The main results of the paper are proved in Sections~\ref{sec3}--\ref{sec5}.

Theorem~\ref{dggkq} of Section~\ref{sec3} shows that the center of distances $C(X)$ of an arbitrary ${\bf UT}$-space $(X,d)$ satisfies either the equality
\begin{equation}
    \label{x2}
C(X)=\{0,\operatorname{diam} X\}
\end{equation}
or the equality
\begin{equation}
    \label{x1}
C(X)=\{0\}.
\end{equation}

In Section~\ref{sec4} we introduce the concept of centered spheres and use it in Theorem~\ref{shak1} to describe the geometry of ${\bf UT}$-spaces $(X,d)$ which satisfy \eqref{x2}. Theorem~\ref{efhik} of this section  gives a necessary and sufficient condition    under which each open ball of a given ${\bf UT}$-space is a centered  sphere in this space.
Theorem~\ref{kmhg} of Section~\ref{sec4} shows that an ultrametric space $(X,d)$ is equidistant if and only if each centered sphere of $(X,d)$ is an open ball of $(X,d)$.

Theorem~\ref{kgcd} of Section~\ref{sec5} shows that equality~\eqref{x2} holds for ${\bf UT}$-space $(X,d)$ if and only if the diametrical graph of $(X,d)$ has a spanning star subgraph.
Corollary~\ref{shak} of Theorem~\ref{rgjkjs24} shows that equality~\eqref{x1} holds for ${\bf UT}$-space $(X,d)$ with $|X|\geq 2$ if and only if $(X,d)$ is weakly similar to an unbounded ultrametric space.

The final Section~\ref{sec6} contains some conjectures related to results of the paper.

\section{Preliminaries}

In what follows we will use the symbol $\mathbb R^+$ to denote the set $[0,+\infty)$.

\begin{Definition}\label{deff1}Let $X$ be a non-empty set. A metric on $X$ is a symmetric function $d\colon X\times X \to \mathbb R^+$ satisfying the equality 
$d(x,x)=0$ for each $x\in X$, the inequality $d(x,y)>0$ for all distinct $x,y\in X$, and  the triangle inequality
 \[d(x,y)\leq d(x,z) + d(z,y)\]
 for all \(x\), \(y \), \(z \in X\).
\end{Definition}

A metric space \((X, d)\) is called an \emph{ultrametric space} if the \emph{strong triangle inequality}
\[
d(x,y)\leq \max \{d(x,z),d(z,y)\}
\]
holds for all \(x\), \(y\), \(z \in X\). The function \(d\) is called \emph{an ultrametric} on \(X\).

Following \cite[p.~46]{Dez-Dez} we say that a metric $d : X \times X \to \mathbb{R}^+$ is \emph{equidistant} if there is a constant $k > 0$ such that
\[
d(x,y) = k
\]
whenever $x$ and $y$ are distinct points of $X$.

\begin{Definition}\label{eddrr}
Let $(X,d)$ and $(Y,\rho)$ be metric spaces. A bijective mapping
\(
\Phi \colon X \to Y
\)
is said to be an isometry if
\[
d(x,y) = \rho(\Phi(x), \Phi(y))
\]
holds for all $x,y \in X$. The metric spaces are \emph{isometric} if there is an
isometry of these spaces.
\end{Definition}

Let $(X,d)$ be a metric space. In what follows we will denote by $D(X)$ the \textit{distance set} of the space $(X,d)$,
\begin{equation}
    \label{tghii11}
D(X) := \{ d(x,y) : x,y \in X \}.
\end{equation}


\begin{Definition}\label{zcgh572}
Let $(X,d)$ be a metric space.
The center of distances of $(X,d)$ is the set $C(X)$
 defined as
\begin{equation}
    \label{edft34Gb}
C(X):=\{t \in D(X) : \forall p\in X\ \exists x\in X\ d(p,x)=t\}.
\end{equation}
\end{Definition}

\begin{Example}[The $p$-adic ultrametric]\label{ex1}
Let $p \geq 2$ be a prime number, $\mathbb Q$ be the set of rational numbers, and $\mathbb Z$ be the set of integers. The $p$-adic valuation of $t \in \mathbb{Q}$ is defined as
\begin{equation}\label{hh1}
|t|_p :=
\begin{cases}
p^{-\gamma}, & \text{if } t \neq 0, \\
0, & {\text otherwise},
\end{cases}
\end{equation}
where $\gamma=\gamma(t)$ is the unique $\gamma \in \mathbb{Z}$ such that
\[
t=p^\gamma \frac{m}{n},
\]
and $m,$ $n$ are coprime to $p$.
The $p$-adic valuation satisfies the inequality
\begin{equation*}
|t+w|_p \leq \max\{|t|_p,|w|_p\}
\end{equation*}
for all $t,w \in \mathbb{Q}$.
This implies that the mapping
\(
d_p : \mathbb{Q} \times \mathbb{Q} \to \mathbb{R}^+, \)
\begin{equation}
    \label{f}
d_p(t,w) := |t-w|_p
\end{equation}
is an ultrametric on $\mathbb{Q}$. (See formula (1.10) in Chapter~1 of \cite{Bachman1964}.)
It directly follows from \eqref{f} that the mapping
\[
{\mathbb Q} \ni x \mapsto x+y \in {\mathbb Q}
\]
is a self-isometry of the space $({\mathbb Q}, d_p)$ for every $y\in \mathbb Q$. Which gives us the equalities
\begin{equation}
    \label{ff1}
D(\mathbb{Q}) = C(\mathbb{Q}) = \{\, p^\gamma : \gamma \in \mathbb{Z} \,\} \cup \{0\}
\end{equation}
in accordance with formulas \eqref{tghii11}--\eqref{f}.
\end{Example}

Let $A$ be a non-empty subset of a metric space $(X,d)$. The quantity
\begin{equation}\label{lokias}
    \operatorname{diam} A := \sup\{ d(x,y) : x,y \in A \}
\end{equation}
is the \emph{diameter} of $A$. If the inequality $\operatorname{diam} A < \infty$
holds, then we say that $A$ is a \emph{bounded} subset of $(X,d)$.

The next concept was introduced in \cite{DP2013AMH} as a generalization of the concept of similarity for general semimetric spaces.

\begin{Definition}\label{xzsdd}
\label{scguite}
Let $(X, d)$, $(Y, \rho)$ be metric spaces and let $D(X)$, $D(Y)$ be the distance sets of $(X, d)$, $(Y, \rho)$ respectively.
 A bijective mapping $\Phi\colon X \to Y$ is a {\it weak similarity} if there is a strictly increasing bijection $f\colon D(Y) \to D(X)$ such that 
\begin{equation*}
d(x, y) = f \left( \rho \left( \Phi(x), \Phi(y) \right) \right)
\end{equation*}
for all $x, y \in X$.
\end{Definition}

We will say that metric spaces $(X,d)$ and $(Y,\rho)$ are \emph{weakly similar} if there is a weak similarity $
X \to Y $.

The next theorem is a reformulation of Theorem~3.22 from \cite{BDK2022TAG}.

\begin{Theorem}\label{rgjkjs24}
Let $(X,d)$ be an ultrametric space with $|X|\geq 2.$ Then the following statements are equivalent:
 \begin{enumerate}[label=\textit{(\roman*)}, left=0pt]
     \item $ (X,d)$ is weakly similar to an unbounded ultrametric space.
 \item The distance set $D(X)$ does not contain $\operatorname{diam} X$, 
 \begin{equation*}
     \operatorname{diam} X \notin D(X).
 \end{equation*}
 \end{enumerate}
\end{Theorem}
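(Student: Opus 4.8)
The plan is to prove the two implications directly, resting on one elementary observation: by Definition~\ref{scguite} a weak similarity $\Phi\colon X\to Y$ comes equipped with a strictly increasing bijection $f\colon D(Y)\to D(X)$, so $f$ (hence also $f^{-1}$) is an order isomorphism between the distance sets; moreover $\operatorname{diam} X=\sup D(X)$ for every metric space, so statement \textit{(ii)} simply asserts that the supremum of $D(X)$ is not attained. Since $0=\min D(X)$ and $0=\min D(Y)$, such an $f$ automatically fixes $0$.

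For \textit{(i)} $\Rightarrow$ \textit{(ii)} I would argue by contradiction. Assume $(X,d)$ is weakly similar to an unbounded ultrametric space $(Y,\rho)$ via $\Phi\colon X\to Y$ and a strictly increasing bijection $f\colon D(Y)\to D(X)$, and suppose $\operatorname{diam} X\in D(X)$. Then $\operatorname{diam} X=\max D(X)$, and since $f^{-1}$ is an order isomorphism, $f^{-1}(\operatorname{diam} X)=\max D(Y)$, a finite number because it lies in $D(Y)\subseteq\mathbb R^+$. Hence $\operatorname{diam} Y=\sup D(Y)<+\infty$, contradicting the unboundedness of $(Y,\rho)$. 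Note that this direction uses only the order structure of distance sets, not the strong triangle inequality.

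For \textit{(ii)} $\Rightarrow$ \textit{(i)}, if $\operatorname{diam} X=+\infty$ then $(X,d)$ is already unbounded and weakly similar to itself via the identity, so we may assume $M:=\operatorname{diam} X<+\infty$; by \textit{(ii)} this gives $M\notin D(X)$, hence $D(X)\subseteq[0,M)$. The idea is to ``stretch $[0,M)$ onto $[0,+\infty)$'': set $h(t):=t/(M-t)$, a strictly increasing bijection of $[0,M)$ onto $[0,+\infty)$ with $h(0)=0$, and put $\rho:=h\circ d$ on $X\times X$. Then $\rho$ is again an ultrametric on $X$: symmetry and positivity are immediate, and the strong triangle inequality is preserved because $h$ is monotone, since $\rho(x,y)=h(d(x,y))\le h(\max\{d(x,z),d(z,y)\})=\max\{\rho(x,z),\rho(z,y)\}$ for all $x,y,z\in X$. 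The distance set of $(X,\rho)$ equals $h(D(X))$, which is unbounded because $D(X)$ contains points arbitrarily close to $M$ while $h(t)\to+\infty$ as $t\to M^-$; thus $(X,\rho)$ is an unbounded ultrametric space. Finally, the identity map $X\to(X,\rho)$ together with the restriction of $h^{-1}$ to $h(D(X))$ is a weak similarity, so $(X,d)$ is weakly similar to $(X,\rho)$.

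I do not expect a serious obstacle here; the one step that genuinely uses the hypothesis is the verification that $h\circ d$ is still an ultrametric, which is exactly where the strong triangle inequality (rather than the ordinary one) matters, since a monotone reparametrisation of an arbitrary metric need not be a metric. Alternatively, the statement can be obtained by rephrasing Theorem~3.22 of \cite{BDK2022TAG} in the present terminology; the self-contained argument above is included for completeness.
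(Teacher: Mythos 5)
Your proof is correct, but it cannot be compared with a proof ``in the paper'': the authors do not prove Theorem~\ref{rgjkjs24} at all, they state it as a reformulation of Theorem~3.22 of \cite{BDK2022TAG} and delegate the argument entirely to that reference. What you have supplied is therefore a self-contained replacement for an external citation, and it holds up. The forward implication is, as you observe, purely order-theoretic: a strictly increasing bijection between distance sets carries a maximum to a maximum, so an attained $\operatorname{diam}X$ forces the weakly similar space to have an attained (hence finite) diameter. The reverse implication is handled by the explicit rescaling $h(t)=t/(M-t)$ of $[0,M)$ onto $[0,+\infty)$, and you correctly isolate the single point where ultrametricity (as opposed to the ordinary triangle inequality) is indispensable, namely that $h(\max\{a,b\})=\max\{h(a),h(b)\}$ makes $h\circ d$ again an ultrametric, whereas a monotone reparametrisation of a general metric need not remain a metric. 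The two small facts you use implicitly are both available: $M>0$ because $|X|\geq 2$, and since $M=\sup D(X)$ is not attained, $D(X)$ contains points arbitrarily close to $M$, which is exactly what makes $h(D(X))$ unbounded. The trade-off is the usual one: the citation keeps the paper short, while your construction gives the reader a concrete witness for the unbounded space in statement \textit{(i)} and makes visible which hypothesis each direction actually consumes.
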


\begin{Lemma}\label{nnhh}
Let $A$ be a non-empty subset of an ultrametric space $(X,d)$. Then the equality
\begin{equation}\label{ssqq}
\operatorname{diam} A=\sup\{d(p,a): a\in A\}
\end{equation}
holds for each $p\in A$.
\end{Lemma}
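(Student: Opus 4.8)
The plan is to prove, for an arbitrary fixed $p\in A$, the two inequalities $\sup\{d(p,a):a\in A\}\le\operatorname{diam}A$ and $\operatorname{diam}A\le\sup\{d(p,a):a\in A\}$, all suprema being understood in the extended reals $[0,+\infty]$ so that no boundedness hypothesis on $A$ is needed. The first inequality is immediate from definition \eqref{lokias}: for every $a\in A$ both $p$ and $a$ lie in $A$, so $d(p,a)$ is one of the numbers over which the supremum in \eqref{lokias} is taken, whence $d(p,a)\le\operatorname{diam}A$; passing to the supremum over $a\in A$ gives the claim.

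For the reverse inequality I would invoke the strong triangle inequality. Write $s:=\sup\{d(p,a):a\in A\}$ and let $x,y\in A$ be arbitrary. Since $p\in A$, both $d(x,p)$ and $d(p,y)$ belong to $\{d(p,a):a\in A\}$ and hence are at most $s$; therefore
\[
d(x,y)\le\max\{d(x,p),d(p,y)\}\le s.
\]
As $x,y\in A$ were arbitrary, $s$ is an upper bound for $\{d(x,y):x,y\in A\}$, so $\operatorname{diam}A=\sup\{d(x,y):x,y\in A\}\le s$. Combining the two inequalities yields \eqref{ssqq}. There is no genuine obstacle here — the statement is just the quantitative form of the familiar fact that in an ultrametric space every point of a set may serve as a ``center'' — and the only point to be careful about is carrying the argument out in $[0,+\infty]$ so that it simultaneously covers bounded and unbounded $A$.
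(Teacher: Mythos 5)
Your proof is correct and follows essentially the same route as the paper: the easy inequality $\sup\{d(p,a):a\in A\}\le\operatorname{diam}A$ comes straight from \eqref{lokias}, and the reverse inequality comes from the strong triangle inequality via $d(x,y)\le\max\{d(x,p),d(p,y)\}$. Your extra care about interpreting the suprema in $[0,+\infty]$ is a reasonable clarification but not a substantive difference.
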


\begin{proof}
Let $p$ be an arbitrary point of $A$. Then, using the strong triangle inequality, we obtain
\[
d(x,y)\leq \max\{d(p,x),d(p,y)\}\leq \sup\{d(p,a)\colon a\in A\}.
\]
Consequently, the inequality
\[
\operatorname{diam} A \leq \sup\{d(p,a)\colon a\in A\}
\]
holds.
The converse inequality follows directly from~\eqref{lokias}. Thus equality~\eqref{ssqq} holds.
\end{proof}

\begin{Remark}
Equality \eqref{ssqq} is known, see, for example, equality (1.3) in \cite{Dov2019pNUAA}, but the authors cannot give here an exact reference to standard books on ultrametric spaces.
\end{Remark}

Let \((X, d)\) be a metric space. The \emph{open ball} with a \emph{radius} \(r > 0\) and a \emph{center} \(c \in X\) is the set
\begin{equation}
    \label{02}
B_r(c): = \{x \in X \colon d(c, x) < r\}.
\end{equation}
Let us denote by ${\bf B}_X$ the set of all open balls of metric space $(X,d)$.

The next proposition is a part of Proposition 18.4 from \cite{Sch1985}.

\begin{Proposition}
    \label{dj}
 Let $(X,d)$ be an ultrametric space. Then, for every ball $B_r(c) \in {\bf B}_X$ and every $a \in B_r(c)$, we have
\[
B_r(c) = B_r(a).
\]
\end{Proposition}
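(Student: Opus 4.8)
The plan is to establish the two inclusions $B_r(c) \subseteq B_r(a)$ and $B_r(a) \subseteq B_r(c)$ separately, each by a single direct application of the strong triangle inequality, and then combine them.

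First I would show that $a \in B_r(c)$ implies $B_r(c) \subseteq B_r(a)$. Take an arbitrary $x \in B_r(c)$. By \eqref{02} we have $d(c,x) < r$, and since $a \in B_r(c)$ also $d(c,a) < r$. The strong triangle inequality then gives
\[
d(a,x) \le \max\{d(a,c),\, d(c,x)\} < r,
\]
so $x \in B_r(a)$. As $x$ was arbitrary, this yields $B_r(c) \subseteq B_r(a)$.

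Next I would note that the hypothesis $a \in B_r(c)$ is symmetric in $a$ and $c$: from $d(c,a) < r$ and the symmetry of $d$ we get $d(a,c) < r$, i.e.\ $c \in B_r(a)$. Applying the inclusion just established with the roles of $a$ and $c$ interchanged gives $B_r(a) \subseteq B_r(c)$. Together with the first inclusion this proves $B_r(c) = B_r(a)$.

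There is no real obstacle here: the only thing worth stressing is that the argument uses nothing beyond the strong triangle inequality and the symmetry of $d$, so it applies to every ultrametric space, with no recourse to completeness, boundedness, or the labeled-tree structure.
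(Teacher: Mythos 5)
Your proof is correct: the two inclusions follow exactly as you describe from the strong triangle inequality and the symmetry of $d$, and the observation that $a \in B_r(c)$ iff $c \in B_r(a)$ lets you get the reverse inclusion for free. The paper does not prove this statement itself but cites it as part of Proposition~18.4 of \cite{Sch1985}; your argument is the standard one found there, so there is nothing to compare beyond noting that you have supplied the elementary proof the paper omits.
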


Now, we recall the concepts of \emph{compactness} and \emph{total boundedness}.

A standard definition of compactness is usually formulated as:
a metric space $(X,d)$ is compact if every open cover of $X$ has a finite subcover.

\begin{Definition}\label{aghj}
A metric space $(X,d)$ is totally bounded if for every
$r>0$ there is a finite set
\(
\{B_r(x_1),\ldots,B_r(x_n)\}\subseteq {\bf B}_X
\)
such that
\[
X \subseteq \bigcup_{i=1}^{n} B_r(x_i).
\]
\end{Definition}

It follows directly from Definition~\ref{aghj} that every compact metric space is totally bounded.

The next proposition is a corollary of Theorem~3.6 from paper \cite{DS2022TRoUCaS}.

\begin{Proposition}\label{zbhfd}
Let $(X,d)$ be an infinite totally bounded ultrametric space and let $D(X)$ be the distance set of the space $(X,d)$. Then there is a 
 strictly decreasing sequence
$(x_n)_{n \in \mathbb{N}}$ of points of $\mathbb R^+$ such that
\[
\lim_{n \to \infty} x_n = 0
\]
 holds and the equivalence
\[
(x \in D(X)) \iff (x = 0 \text{ or } \exists n \in \mathbb{N} : x_n = x)
\]
is valid for every $x \in \mathbb{R}^+$.
\end{Proposition}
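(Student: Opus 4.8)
The plan is to extract the required strictly decreasing null sequence as an enumeration of the nonzero distances of $(X,d)$, and the work consists of showing that $D(X)\setminus\{0\}$ is a countable set whose only accumulation point in $\mathbb R^+$ is $0$. First I would observe that, since $(X,d)$ is totally bounded, for every $r>0$ the space $X$ is covered by finitely many open balls of radius $r$; in an ultrametric space two open balls of the same radius are either disjoint or equal (this is immediate from Proposition~\ref{dj}), so in fact $X$ is partitioned into finitely many such balls, say $B_r(y_1),\dots,B_r(y_m)$. If $x,y\in X$ satisfy $d(x,y)\geq r$, then $x$ and $y$ lie in different balls of this partition, hence there are at most $\binom{m}{2}$ possible pairs of balls and, crucially, I claim there are only finitely many \emph{values} $d(x,y)$ with $d(x,y)\geq r$. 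Indeed, if $x,y$ lie in distinct balls $B_r(y_i)$, $B_r(y_j)$ then for any other $x'\in B_r(y_i)$, $y'\in B_r(y_j)$ the strong triangle inequality forces $d(x',y')=d(x,y)$: from $d(x,x')<r\leq d(x,y)$ we get $d(x',y)=d(x,y)$, and then from $d(y,y')<r\leq d(x',y)$ we get $d(x',y')=d(x',y)=d(x,y)$. So the distance between two fixed balls of the partition is a single well-defined number, and therefore $D(X)\cap[r,\infty)$ has at most $\binom{m}{2}$ elements.

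Having established that $D(X)\cap[r,\infty)$ is finite for every $r>0$, it follows that $D(X)\setminus\{0\}$ is countable (it is the increasing union of the finite sets $D(X)\cap[1/n,\infty)$ over $n\in\mathbb N$) and that $0$ is its only possible accumulation point, since any point $t>0$ has a neighbourhood $(t/2,\infty)$ meeting $D(X)$ in a finite set. Moreover $D(X)\setminus\{0\}$ is nonempty: $X$ is infinite, hence contains two distinct points, whose distance is a positive element of $D(X)$. A countably infinite subset of $\mathbb R^+$ whose only accumulation point is $0$ and which does not contain $0$ can be written as the range of a strictly decreasing sequence $(x_n)_{n\in\mathbb N}$ with $x_n\to 0$: enumerate it so that $x_1=\sup(D(X)\setminus\{0\})=\operatorname{diam}X$ (the supremum is attained precisely because $D(X)\cap[x_1/2,\infty)$ is finite) and proceed recursively, each time taking $x_{n+1}$ to be the largest element of $D(X)\setminus\{0\}$ below $x_n$. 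One should also handle the degenerate case where $D(X)\setminus\{0\}$ is finite: but this cannot occur for an infinite totally bounded space, since a finite distance set bounded below by a positive constant would make $(X,d)$ uniformly discrete and hence, together with total boundedness, finite — contradicting $|X|=\infty$. Thus $D(X)\setminus\{0\}$ is genuinely countably infinite and the recursive enumeration produces the desired sequence.

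The main obstacle is the key finiteness claim that $D(X)\cap[r,\infty)$ is finite, i.e.\ the observation that the distance between two blocks of the radius-$r$ partition is a single number determined by the blocks; once this ultrametric rigidity is in hand the rest is routine real analysis on subsets of $\mathbb R^+$. Since the statement asserts this proposition is a corollary of Theorem~3.6 of \cite{DS2022TRoUCaS}, an alternative (and shorter) route is simply to invoke that theorem and translate its conclusion into the form stated here; I would include the self-contained argument above as the more illuminating option, but either is acceptable.
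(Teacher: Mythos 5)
Your argument is correct, but it takes a genuinely different route from the paper: the paper gives no proof of Proposition~\ref{zbhfd} at all, simply declaring it a corollary of Theorem~3.6 of \cite{DS2022TRoUCaS}, whereas you supply a complete, self-contained and elementary derivation. Your key lemma --- that $D(X)\cap[r,\infty)$ is finite for every $r>0$ --- is established correctly: total boundedness together with Proposition~\ref{dj} yields a finite partition of $X$ into open balls of radius $r$, within each of which all distances are $<r$, and the ultrametric isosceles property shows that the distance between two fixed blocks of this partition is a single well-defined number, so there are at most $\binom{m}{2}$ distances of size at least $r$. The remaining steps (countability of $D(X)\setminus\{0\}$, the fact that it must be infinite because a positive lower bound on distances plus total boundedness would force $X$ to be finite, attainment of each successive maximum, and the resulting strictly decreasing enumeration converging to $0$) are all sound routine real analysis. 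What the citation route buys is brevity and deference to an already-published more general statement; what your route buys is a transparent proof readable without consulting \cite{DS2022TRoUCaS}, and it isolates the one piece of ultrametric rigidity that actually drives the result. Either is acceptable, and you correctly flag both options.
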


Further, let us recall some definitions and facts from graph theory.

A \textit{graph} is a pair $(V, E)$ consisting of a non-empty set $V$ and a set $E$ whose elements are unordered pairs $\{u, v\}$ of different points $u, v \in V$. For a graph $G = (V, E)$, the sets $V = V(G)$ and $E = E(G)$ are called \textit{the set of vertices} and \textit{the set of edges}, respectively. A graph $G$ is \textit{finite} if $V(G)$ is a finite set. If $\{x, y\} \in E(G)$, then the vertices $x$ and $y$ are \emph{ends} of $\{x,y\}$ and called \textit{adjacent}. In what follows, we will always assume that $E(G) \cap V(G) = \emptyset$.
A graph $G$ is \emph{empty} if $E(G)=\emptyset$.

The following concept is well-known for finite graphs  (see, for example, \cite[p.~17]{Diestel2005}). 

\begin{Definition}\label{scfr} Let $G$ be a graph and let $k \geq 2$ be a cardinal number. The graph $G$
is complete $k$-partite if the vertex set $V(G)$ can be partitioned into $k$ non-empty
disjoint parts, in such a way that no edge has both ends in the same part and
any two vertices in different parts are adjacent.
\end{Definition}

We say that $G$ is a \emph{complete multipartite graph} if  $G$ is complete $k$-partite for some $k$.

The next concept was introduced in~\cite{PD2014JMS} for characterization of finite ultrametric spaces for which the Gomory--Hu inequality~\cite{GH1961S} turns to equality.

\begin{Definition}\label{dvhj} Let $(X,d)$ be a metric space. Denote by $G_{X}$ a graph
such that $V(G_{X}) = X$ and, for $u, v \in V(G_{X})$,
\begin{equation}
    \label{ojhr}
(\{u,v\} \in E(G_{X})) \iff (d(u,v) = \operatorname{diam} X \text{ and } u \ne v).
\end{equation}

We call $G_{X}$ the diametrical graph of $(X,d)$.
\end{Definition}

\begin{Remark}\label{suit}
It directly follows  from \eqref{ojhr} that $G_X$ is empty if and only if  $X$ is a singleton or $\operatorname{diam} X \notin D(X)$.
\end{Remark}

The following theorem is a direct consequence of Theorems 3.1 and 3.2 from \cite{DDP2011pNUAA}.

\begin{Theorem}\label{ref} Let $(X,d)$ be an ultrametric space with $|X| \geq 2$. Then the following
statements are equivalent:

\begin{enumerate}[label=\textit{(\roman*)}, left=0pt]
\item The diametrical graph $G_{X}$ is non-empty.
\item The diametrical graph $G_{X}$ is complete multipartite.
\end{enumerate}

\end{Theorem}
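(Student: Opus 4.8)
The plan is to settle the easy implication (ii) $\Rightarrow$ (i) in one line and to prove (i) $\Rightarrow$ (ii) by constructing the required vertex partition explicitly. For (ii) $\Rightarrow$ (i): if $G_X$ is complete $k$-partite with $k\geq 2$, then choosing points $u,v$ lying in two different parts gives $\{u,v\}\in E(G_X)$, so $G_X$ is non-empty.

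For (i) $\Rightarrow$ (ii), I would first observe that a non-empty $G_X$ forces $\operatorname{diam} X\in D(X)$ by Remark~\ref{suit}; since $|X|\geq 2$ and $d$ takes values in $\mathbb R^+$, this means $0<\operatorname{diam} X<\infty$ and $\operatorname{diam} X$ is actually attained. Next, define a binary relation $\equiv$ on $X$ by $u\equiv v$ if and only if $u=v$ or $d(u,v)<\operatorname{diam} X$. Reflexivity and symmetry are immediate, and transitivity is exactly the strong triangle inequality: from $d(u,v)<\operatorname{diam} X$ and $d(v,w)<\operatorname{diam} X$ we get $d(u,w)\leq\max\{d(u,v),d(v,w)\}<\operatorname{diam} X$. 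Hence $\equiv$ is an equivalence relation, and its classes form a partition $\{X_i:i\in I\}$ of $V(G_X)=X$ into non-empty parts.

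It then remains to verify the two defining properties from Definition~\ref{scfr}. No edge of $G_X$ has both ends in one class: if $u,v\in X_i$ are distinct, then $d(u,v)<\operatorname{diam} X$, so $\{u,v\}\notin E(G_X)$ by \eqref{ojhr}. Conversely, if $u\in X_i$ and $v\in X_j$ with $i\neq j$, then $u\neq v$ and $u\not\equiv v$, so $d(u,v)\geq\operatorname{diam} X$; together with $d(u,v)\leq\operatorname{diam} X$ from \eqref{lokias} this yields $d(u,v)=\operatorname{diam} X$, i.e.\ $\{u,v\}\in E(G_X)$. Finally $|I|\geq 2$, because a non-empty $G_X$ contains an edge $\{x,y\}$, and $d(x,y)=\operatorname{diam} X$ puts $x$ and $y$ into distinct $\equiv$-classes. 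Thus $G_X$ is complete $|I|$-partite.

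There is essentially no serious obstacle here; the only points requiring a little care are the implicit finiteness and attainment of $\operatorname{diam} X$ (which is why Remark~\ref{suit} is invoked first, so that ``non-empty'' and ``complete $k$-partite with $k\geq 2$'' genuinely correspond), and the fact that Definition~\ref{scfr} allows $k$ to be an arbitrary cardinal $\geq 2$, so the construction needs no modification when $X$ has infinitely many $\equiv$-classes.
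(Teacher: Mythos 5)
Your proof is correct. Note that the paper does not actually prove Theorem~\ref{ref}; it imports it as a consequence of Theorems~3.1 and~3.2 of the cited reference, so there is no in-text argument to compare against. Your construction --- declaring $u\equiv v$ when $u=v$ or $d(u,v)<\operatorname{diam}X$, using the strong triangle inequality for transitivity, and taking the $\equiv$-classes as the parts --- is the standard argument behind the cited result, and you handle the two points that actually need care: that a non-empty $G_X$ forces $\operatorname{diam}X$ to be finite and attained (so the partition has at least two classes), and that Definition~\ref{scfr} permits an arbitrary cardinal number of parts, so no finiteness assumption on $X$ is needed. Nothing is missing.
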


Let $G$ be a graph.
A graph \(G_1\) is a \emph{subgraph} of \(G\) if
\[
V(G_1) \subseteq V(G) \quad \text{and} \quad E(G_1) \subseteq E(G).
\]
In this case we will write \(G_1 \subseteq G\).

A \emph{path} is a finite graph \(P\) whose vertices can be numbered without repetitions so that
\begin{equation}\label{e3.3-1}
V(P) = \{x_1, \ldots, x_k\} \quad \text{and} \quad E(P) = \{\{x_1, x_2\}, \ldots, \{x_{k-1}, x_k\}\}
\end{equation}
with \(k \geqslant 2\). We will write \(P = (x_1, \ldots, x_k)\) or \(P = P_{x_1, x_k}\) if \(P\) is a path satisfying \eqref{e3.3-1} and say that \(P\) is a \emph{path joining \(x_1\) and \(x_k\)}. A graph \(G\) is \emph{connected} if for every two distinct vertices of \(G\) there is a path \(P \subseteq G\) joining these vertices.

A finite graph $C$ is a \textit{cycle} if there exists an enumeration of its vertices without repetitions such that $V(C) = \{x_1, \ldots, x_n\}$ and
\begin{equation*}
E(C) = \{\{x_1, x_2\}, \ldots, \{x_{n-1}, x_n\}, \{x_n, x_1\}\} \quad \text{with } n \geq 3.
\end{equation*}

\begin{Definition}
A connected graph $T$  without cycles is called a \textit{tree}. 
\end{Definition}

Let us recall the concept of star graphs.

\begin{Definition}\label{efhjc}
A tree $T$ is  a star graph if there exists a vertex
$c \in V(T)$ such that 
\begin{equation*}
E(T)=\{\{v,c\} \colon v\in V(T)\setminus \{c\}\}.
\end{equation*}
\end{Definition}

A subtree $T$ of a connected graph $G$ is called a \emph{spanning tree} of $G$ if the equality $V(T)=V(G)$ holds.

Let us introduce the concept of labeled trees.

\begin{Definition}\label{d2.4}
A {\it labeled tree} is a pair \((T, l)\), where \(T\) is a tree and \(l\) is a mapping defined on \(V(T)\).
\end{Definition}

If $(T,l)$ is a labeled tree, then we  write \(T = T(l)\) instead of \((T, l)\). Moreover, in what follows, we will consider only the non-negative real-valued labelings \(l\colon V(T)\to \mathbb R^{+}\).

Following~\cite{Dov2020TaAoG}, for arbitrary labeled tree \(T = T(l)\), we define a mapping \(d_l \colon V(T) \times V(T) \to \mathbb R^{+}\) as
\begin{equation}\label{e11.3}
d_l(u, v) := \begin{cases}
\max\limits_{w \in V(P_{u,v})} l(w), & \text{if} \,\,u \neq v,\\
0 ,& \text{otherwise},
\end{cases}
\end{equation}
where \(P_{u,v}\) is a path joining \(u\) and \(v\) in \(T\).

\begin{Theorem}\label{t11.9}
Let \(T = T(l)\) be a labeled tree. The mapping \(d_l\) is an ultrametric  on the set $V(T)$ if and only if the inequality
\begin{equation}\label{t11.9:e1}
\max\{l(u), l(v)\} > 0
\end{equation}
holds for every \(\{u, v\} \in E(T)\).
\end{Theorem}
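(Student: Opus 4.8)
The plan is to verify the axioms of Definition~\ref{deff1} together with the strong triangle inequality for $d_l$, and to isolate the one axiom whose validity depends on the labeling. First I would record the easy points: in a tree any two distinct vertices are joined by a \emph{unique} path, which is moreover a finite graph, so $V(P_{u,v})$ is a non-empty finite set and $d_l(u,v)=\max_{x\in V(P_{u,v})}l(x)$ is a well-defined element of $\mathbb R^{+}$; symmetry follows from $P_{u,v}=P_{v,u}$; and $d_l(x,x)=0$ is built into \eqref{e11.3}. Hence only positivity on pairs of distinct points and the strong triangle inequality remain, and the strategy is to show that the latter holds for \emph{every} non-negative labeling, so that condition \eqref{t11.9:e1} is precisely what is forced by, and what suffices for, positivity.

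The combinatorial heart of the argument is the tree identity
\[
V(P_{u,v})\subseteq V(P_{u,w})\cup V(P_{w,v})
\]
valid for all vertices $u,v,w$ of $T$. I would prove it by fixing $x\in V(P_{u,v})$; the cases $x=u$ and $x=v$ are immediate, so assume $x$ is an interior vertex of $P_{u,v}$ and pass to the forest $T-x$ obtained by deleting $x$ together with its incident edges. Since the unique $u$--$v$ path runs through $x$, the endpoints $u$ and $v$ lie in different components of $T-x$; consequently the component of $T-x$ containing $w$ omits at least one of $u,v$, and the unique path in $T$ from $w$ to that omitted vertex must re-enter $x$, which places $x$ in $V(P_{u,w})$ or in $V(P_{w,v})$. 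Applying $l$ and taking maxima then gives
\[
d_l(u,v)=\max_{x\in V(P_{u,v})}l(x)\le\max\Bigl\{\max_{x\in V(P_{u,w})}l(x),\ \max_{x\in V(P_{w,v})}l(x)\Bigr\}=\max\{d_l(u,w),d_l(w,v)\},
\]
once the degenerate configurations in which two of $u,v,w$ coincide are disposed of separately; this is the strong triangle inequality, and it used nothing about $l$.

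Finally I would settle positivity. If $\{u,v\}\in E(T)$, the path joining $u$ and $v$ is the single edge, so $d_l(u,v)=\max\{l(u),l(v)\}$; thus if $d_l$ is an ultrametric then $d_l(u,v)>0$ yields \eqref{t11.9:e1}. Conversely, suppose \eqref{t11.9:e1} holds on every edge and let $u\ne v$. Writing $P_{u,v}=(x_1,\dots,x_k)$ with $x_1=u$, $x_k=v$ and $k\ge 2$, the pair $\{x_1,x_2\}$ is an edge, hence $\max\{l(x_1),l(x_2)\}>0$ and so $d_l(u,v)\ge\max\{l(x_1),l(x_2)\}>0$. Combined with the previous two steps, this establishes both implications. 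I expect the main obstacle to be stating and checking the tree identity cleanly (the interior-vertex/component argument) and neatly handling the coincidence cases in the strong triangle inequality; the remaining verifications are routine.
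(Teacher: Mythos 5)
Your proof is correct, and it is worth noting that the paper itself gives no argument for Theorem~\ref{t11.9}: it only remarks that a proof can be obtained by modifying Proposition~3.2 of \cite{Dov2020TaAoG}. So your write-up is a genuinely self-contained alternative. Its structure is the natural one: symmetry, $d_l(x,x)=0$ and well-definedness are immediate from uniqueness of paths in a tree; the strong triangle inequality is reduced to the purely combinatorial inclusion $V(P_{u,v})\subseteq V(P_{u,w})\cup V(P_{w,v})$, which holds for \emph{any} non-negative labeling; and condition \eqref{t11.9:e1} is then seen to be exactly equivalent to positivity on distinct points, using that the path joining two adjacent vertices is the single edge (for necessity) and that the first edge of $P_{u,v}$ already forces $d_l(u,v)>0$ (for sufficiency). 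Your component-deletion proof of the inclusion is sound; an equivalent route is via the median vertex $m$ of the tripod spanned by $u,v,w$, writing $V(P_{u,v})=V(P_{u,m})\cup V(P_{m,v})$, but nothing is gained or lost either way.

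One small point to tighten: in the interior-vertex case of the inclusion you pass to $T-x$ and look at ``the component of $T-x$ containing $w$,'' which presupposes $w\ne x$. The configuration $w=x$ (with $x$ an interior vertex of $P_{u,v}$) is not covered by your list of ``degenerate configurations in which two of $u,v,w$ coincide,'' but it is trivial: $x=w\in V(P_{u,w})$ because $w$ is an endpoint of that path. Add that sentence and the argument is complete.
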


A proof of Theorem \ref{t11.9} can be obtained by simple modification of the proof of Proposition~3.2 in \cite{Dov2020TaAoG}.

As in \cite{DV2025JMS,DR2025USGbLSG} we shall say that a labeling $l \colon V(T) \to \mathbb{R}^+$ is {\it non-degenerate} if  inequality \eqref{t11.9:e1}
holds for every $\{u, v\} \in E(T)$.

The following lemma will be used in the next section of the paper.

\begin{Lemma}\label{lem1}
Let $T=T(l)$ be a labeled tree with a non-degenerate labeling
$l\colon V(T)\to \mathbb{R}^+$ and let $D(V(T))$ be the distance set of the ultrametric space $(V(T),d_l)$.
Then there exists a non-degenerate labeling
$l^*\colon V(T)\to \mathbb{R}^+$ such that
\begin{equation}
    \label{eq0}
d_{l^*}=d_{l}
\end{equation}
and the equality
\begin{equation}
    \label{eq1}
D(V(T))=\{l^*(u)\,:\,u\in V(T)\}
\end{equation}
holds.
\end{Lemma}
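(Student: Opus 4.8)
The plan is to obtain $l^*$ from $l$ by lowering to $0$ precisely those labels which are never realized as a distance, together with one extra label in a single exceptional configuration, and then to check that neither $d_l$ nor non-degeneracy is damaged while the label set $L:=\{l(u):u\in V(T)\}$ shrinks to $D(V(T))$. The starting point is the pair of inclusions
\[
\{0\}\subseteq D(V(T))\subseteq L\cup\{0\}.
\]
The first holds because $d_l(u,u)=0$. For the second, if $u\neq v$ then the maximum in \eqref{e11.3} is attained at some $w'\in V(P_{u,v})$, so $d_l(u,v)=l(w')\in L$; moreover $d_l(u,v)>0$ for $u\neq v$, since $P_{u,v}$ contains an edge whose larger label is positive by non-degeneracy (alternatively, this is Theorem~\ref{t11.9}).

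Next I would set $Z_0:=\{w\in V(T):l(w)\notin D(V(T))\}$ and distinguish two cases. In the \emph{non-exceptional} case, when $Z_0\neq\emptyset$ or $0\in L$, put $Z:=Z_0$. In the \emph{exceptional} case, when every label is a distance and no label is $0$ — so that $D(V(T))=L\cup\{0\}$ with $0\notin L$ and $m:=\min L>0$ — put $Z:=\{w_0\}$ for a fixed $w_0$ with $l(w_0)=m$. In both cases define $l^*(w):=0$ for $w\in Z$ and $l^*(w):=l(w)$ for $w\notin Z$. A key observation for the exceptional case: since $m\in D(V(T))$ we have $m=d_l(u,v)$ for some $u\neq v$, and then every vertex of the path $P_{u,v}$ — of which there are at least two — carries the label $m$ (no larger label can occur on a path of $d_l$-value $m$, and no smaller label exists at all); hence $m$ is attained at two or more vertices, so deleting one of them by setting its label to $0$ still leaves $m$ in the range of $l^*$.

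It remains to verify the three required properties. For $d_{l^*}=d_l$: since $l^*\le l$ pointwise we always have $\max_{w\in V(P_{u,v})}l^*(w)\le d_l(u,v)$, and for equality it suffices to exhibit on each $P_{u,v}$ a vertex at which $l^*$ still equals $d_l(u,v)$; any vertex of $P_{u,v}$ whose label is $d_l(u,v)$ lies outside $Z_0$ (its label is a distance), which settles the non-exceptional case, while in the exceptional case one uses the ``attained twice'' observation when $d_l(u,v)=m$ and there is nothing to prove when $d_l(u,v)>m$. For non-degeneracy, take an edge $\{u,v\}$; as a path it gives $\max\{l(u),l(v)\}=d_l(u,v)\in D(V(T))$, so at least one of $l(u),l(v)$ is in $D(V(T))$, say $l(u)$, whence $u\notin Z_0$ and $l^*(u)=l(u)$ in the non-exceptional case; if this were $0$ then $l(v)>0$ by non-degeneracy of $l$, and $l(v)=d_l(u,v)\in D(V(T))$ forces $v\notin Z_0$ and $l^*(v)=l(v)>0$; in the exceptional case $Z$ is a single vertex of minimal positive label, so each of its incident edges keeps a positive end and all other edges are untouched. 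Finally, the range of $l^*$ is $(L\cap D(V(T)))\cup\{0\}$ in the non-exceptional case (the value $0$ being present because $Z_0\neq\emptyset$ or because some original label was $0$) and $L\cup\{0\}$ in the exceptional case; both equal $D(V(T))$ by the displayed inclusions, since $D(V(T))\setminus\{0\}\subseteq L$.

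The only genuinely delicate point is the bookkeeping around the value $0$: one has to force $0$ into the range of $l^*$ and, at the same time, never create an edge with two $0$-labelled ends, which is exactly what the ``edge-as-a-path'' trick and the ``smallest label is attained at least twice'' observation are designed to handle — note that this last observation relies on the minimum of $L$ being attained, and guaranteeing that (together with the clean treatment of the exceptional configuration) is the step I expect to require the most care; once it is in place the verifications above are routine.
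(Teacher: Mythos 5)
Your core construction is the paper's own: set $l^*(u)=l(u)$ when $l(u)\in D(V(T))$ and $l^*(u)=0$ otherwise, and your verifications of $d_{l^*}=d_l$ and of non-degeneracy in the ``non-exceptional'' case run along the same lines (the paper organizes them through the edge identity $\max\{l(u),l(v)\}=\max\{l^*(u),l^*(v)\}$ rather than through ``some vertex of $P_{x,y}$ realizing $d_l(x,y)$ lies outside $Z_0$'', but these are the same observation). Where you genuinely depart from the paper is the bookkeeping around the value $0$, and there you have put your finger on a real defect: since $0\in D(V(T))$ by \eqref{tghii11}, equality \eqref{eq1} forces $0$ to be a value of $l^*$, whereas the paper's proof merely asserts the inclusion $D(V(T))\subseteq\{l^*(u):u\in V(T)\}$ ``from \eqref{e11.3}'', which only accounts for the nonzero distances. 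Already for a two-vertex tree with both labels equal to $1$ the paper's $l^*$ has range $\{1\}$ while $D(V(T))=\{0,1\}$; your ``exceptional case'' is designed to repair exactly this, and it does so correctly whenever $\min L$ exists.

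However, the gap you flag at the end --- that your repair needs the minimum of $L=\{l(u):u\in V(T)\}$ to be attained --- cannot be closed, because in the residual sub-case the lemma as literally stated is false. Take the ray $v_1,v_2,\dots$ with $l(v_n)=1/n$: this labeling is non-degenerate, every label is a distance, no label is $0$, and $\inf L=0$ is not attained. For any labeling $l^*$ with $d_{l^*}=d_l$, the edge $\{v_{n+1},v_{n+2}\}$ forces $l^*(v_{n+1})\le 1/(n+1)<1/n$, and then the edge $\{v_n,v_{n+1}\}$ forces $l^*(v_n)=1/n$; hence the range of $l^*$ is exactly $\{1/n: n\ge 1\}$ and never contains $0$, although $0\in D(V(T))$. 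So neither your argument nor the paper's can establish \eqref{eq1} as written. What is provable --- and what both arguments already deliver via the non-exceptional case alone, and all that the later applications in Lemma~\ref{iixx}, Theorem~\ref{dggkq}, and Theorem~\ref{shak1} actually use --- is the weakened conclusion $D(V(T))\setminus\{0\}=\{l^*(u):u\in V(T)\}\setminus\{0\}$; under that reading your exceptional case becomes unnecessary and the paper's one-case construction suffices.
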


\begin{proof}
Let us define a labeling $l^*\colon V(T)\to \mathbb{R}^+$ as
\begin{equation}
l^*(u):=
\begin{cases}
l(u), & \text{if}\,\, l(u)\in D(V(T)),\\
0, & \text{otherwise}.
\end{cases}
\label{eq:2}
\end{equation}

Equalities \eqref{eq0} and \eqref{eq1} evidently hold if $|V(T)|=1$. Suppose that $|V(T)|\geq 2$ and consider an arbitrary edge $\{u^\circ,v^\circ\}\in E(T)$.
Using \eqref{e11.3}  we obtain
\begin{equation}
0<d_l(u^\circ,v^\circ)=\max\{l(u^\circ),l(v^\circ)\}
\label{eq:3}
\end{equation}
and, consequently,
\begin{equation}
    \label{eq4}
    \max\{l(u^\circ),l(v^\circ)\} \in D(V(T)).
\end{equation}
Membership relation \eqref{eq4} and formula~\eqref{eq:2} imply the equality
\begin{equation}
\max\{l(u^\circ),l(v^\circ)\}=\max\{l^*(u^\circ),l^*(v^\circ)\}.
\label{eq:5}
\end{equation}
The last equality and formula~\eqref{eq:2} show that
$l^*\colon  V(T)\to\mathbb{R}^+$ is a non-degenerate labeling of $V(T)$ because $l \colon V(T) \to \mathbb R^+$ is non-degenerate.
Thus $(V(T),d_{l^*})$ is an ultrametric space by Theorem~\ref{t11.9}.

Let us prove equality~\eqref{eq0}.
Let $x$ and $y$ be two distinct vertices of $T$ and let $P_{x,y}\subseteq T$
be the path joining $x$ and $y$.
Then using~\eqref{e11.3}, \eqref{eq:3}, and \eqref{eq:5} we obtain
\[
d_l(x,y)=\max_{w \in V(P_{x,y})} l(w)
        =\max_{\{u,v\}\in E(P_{x,y})} \{l(u),l(v)\}=\max_{\{u,v\}\in E(P_{x,y})} \{l^*(u),l^*(v)\}=
\]
\[
\max_{w \in V(P_{x,y})} \{ l^{*}(w) \}
=
d_{l^{*}}(x,y).
\]
Thus equality \eqref{eq0} holds.

To prove equality \eqref{eq1} it suffices to note that
equality \eqref{e11.3} with $l=l^*$ implies the inclusion
\[
D(V(T)) \subseteq \{ l^{*}(u) : u \in V(T) \}
\]
and that the converse inclusion
\[
D(V(T)) \supseteq \{ l^{*}(u) : u \in V(T) \}
\]
follows from \eqref{eq:2}.

The proof is completed. 
\end{proof}

In what follows we will denote by ${\bf UT}$ the class of all ultrametric spaces
which are isometric to the space generated by labeled trees 
with non-degenerate vertex labelings.

The main goal of the present paper is to describe the structure of centers of distances
$C(X)$ for $(X,d) \in {\bf UT}$.

\section{Center of distances of UT-spaces}\label{sec3}

Let $(X,d)$ be a metric space,  let $p \in X$, and  
let $D_p(X)$ be a subset of 
the distance set $D(X)$
 defined as
\begin{equation}
    \label{fesdi6}
D_p(X) := \{ d(p,x) : x \in X \}. 
\end{equation}

\begin{Proposition}\label{ugrdd}
Let $(X,d)$ be a metric space and let $C(X)$ be the center of distances of $(X,d)$. Then the equality 
\begin{equation}
    \label{wee344}
C(X) = \bigcap_{p \in X} D_p(X) 
\end{equation}
holds.
\end{Proposition}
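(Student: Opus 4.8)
The plan is to prove the equality \eqref{wee344} by establishing the two inclusions $C(X)\subseteq\bigcap_{p\in X}D_p(X)$ and $\bigcap_{p\in X}D_p(X)\subseteq C(X)$ separately, working directly from the definitions \eqref{tghii11}, \eqref{edft34Gb} and \eqref{fesdi6}.

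For the inclusion $C(X)\subseteq\bigcap_{p\in X}D_p(X)$, I would take an arbitrary $t\in C(X)$. By \eqref{edft34Gb}, for every $p\in X$ there is some $x\in X$ with $d(p,x)=t$; in view of \eqref{fesdi6} this says exactly that $t\in D_p(X)$. Since $p\in X$ was arbitrary, we conclude $t\in\bigcap_{p\in X}D_p(X)$.

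For the converse inclusion, I would take an arbitrary $t\in\bigcap_{p\in X}D_p(X)$ and first fix some $p_{0}\in X$, which is possible because $X$ is non-empty by Definition~\ref{deff1}. Then $t\in D_{p_{0}}(X)$, and since $D_{p_{0}}(X)\subseteq D(X)$ by \eqref{tghii11} and \eqref{fesdi6}, we get $t\in D(X)$. Moreover, for every $p\in X$ the relation $t\in D_{p}(X)$ provides some $x\in X$ with $d(p,x)=t$. Thus $t$ meets all the conditions in \eqref{edft34Gb}, that is, $t\in C(X)$.

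There is essentially no obstacle in this argument; the only point worth highlighting is that the requirement $t\in D(X)$ that is built into the definition of $C(X)$ is automatically satisfied by any element of $\bigcap_{p\in X}D_p(X)$, since each $D_p(X)$ is contained in $D(X)$ and $X\neq\varnothing$. Combining the two inclusions gives \eqref{wee344}, completing the proof.
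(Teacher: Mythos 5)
Your proposal is correct and follows essentially the same route as the paper's proof, which simply unpacks Definition~\ref{zcgh572} and formulas \eqref{edft34Gb}, \eqref{fesdi6} after noting that $X\neq\varnothing$; you merely spell out the two inclusions in more detail. Your explicit remark that the condition $t\in D(X)$ in \eqref{edft34Gb} is automatic for elements of the intersection is a worthwhile clarification, but it does not change the argument.
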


\begin{proof}
All metric spaces are non-empty by Definition~\ref{deff1}.
Consequently, the right side of equality \eqref{wee344} is correctly defined.
Now \eqref{wee344} follows from Definition \ref{zcgh572} and formulas \eqref{edft34Gb},  \eqref{fesdi6}. 
\end{proof}

\begin{Proposition}\label{xxxy} Let $(X,d)$ be a metric space and let $C(X)$ be the center of distances of $(X,d)$.
Then  $C(X)$ contains the point $0$.
\end{Proposition}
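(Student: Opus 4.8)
The plan is to unwind the definitions. By Definition~\ref{zcgh572}, we have $C(X)=\{t\in D(X): \forall p\in X\ \exists x\in X\ d(p,x)=t\}$, so to show $0\in C(X)$ it suffices to verify two things: that $0\in D(X)$, and that for every $p\in X$ there is some $x\in X$ with $d(p,x)=0$.

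For the first point, recall that $X$ is non-empty by Definition~\ref{deff1}, so we may fix a point $p_0\in X$; then $d(p_0,p_0)=0$ by the same definition, and hence $0=d(p_0,p_0)\in D(X)$ by formula~\eqref{tghii11}. For the second point, given an arbitrary $p\in X$, take $x=p$; again $d(p,p)=0$ by Definition~\ref{deff1}, so the equation $d(p,x)=0$ has the solution $x=p$. Combining these two observations with formula~\eqref{edft34Gb} gives $0\in C(X)$.

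There is really no obstacle here; the statement is essentially immediate from the axioms of a metric space, the only (trivial) subtlety being that one must first observe $0\in D(X)$, which needs the non-emptiness of $X$. Alternatively, one could cite Proposition~\ref{ugrdd}: since $0\in D_p(X)$ for every $p\in X$ (because $d(p,p)=0$), equality~\eqref{wee344} yields $0\in\bigcap_{p\in X}D_p(X)=C(X)$.
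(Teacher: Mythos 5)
Your proposal is correct, and its closing alternative (via Proposition~\ref{ugrdd} and the observation that $0\in D_p(X)$ because $d(p,p)=0$) is exactly the argument the paper gives; your direct unwinding of Definition~\ref{zcgh572} is the same computation phrased without the intersection formula. No issues.
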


\begin{proof}
Since we have $d(p,p)=0$ for every $p \in X$, equality \eqref{fesdi6} implies
\[
0 \in D_p(X)
\]
for all $p \in X$.
Hence
\begin{equation*}
0 \in \bigcap_{p \in X} D_p(X)
\end{equation*}
holds. The last membership relation and Proposition~\ref{ugrdd} imply
\(
    0\in C(X)
\)
as required.
\end{proof} 

\begin{Remark}
 Propositions \ref{ugrdd} and \ref{xxxy}  are known, see, for example, \cite{NPT2025} and \cite{BPW2018}.
\end{Remark}

Lemma~\ref{nnhh} and Proposition~\ref{ugrdd} imply the following corollary.

\begin{Corollary}\label{rep}
Let $(X,d)$ be an ultrametric space, $D(X)$ be the distance set of $(X,d)$ and $C(X)$ be the centers of distances of $(X,d)$. If 
$\operatorname{diam} X $ belongs to $D(X)$,
\begin{equation*}
    \operatorname{diam} X \in D(X),
\end{equation*}
then $\operatorname{diam} X$ also belongs to $C(X)$, 
\begin{equation*}
    \operatorname{diam} X \in C(X).
\end{equation*}
\end{Corollary}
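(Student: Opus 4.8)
The plan is to combine the two results the corollary explicitly invokes. Suppose $(X,d)$ is an ultrametric space with $\operatorname{diam} X \in D(X)$; I want to show $\operatorname{diam} X \in C(X)$. By Proposition~\ref{ugrdd} it suffices to prove that $\operatorname{diam} X \in D_p(X)$ for every $p \in X$, i.e. that for each $p$ there is some $x \in X$ with $d(p,x) = \operatorname{diam} X$.

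First I would fix an arbitrary $p \in X$. Since $\operatorname{diam} X \in D(X)$, the supremum in \eqref{lokias} is attained, so $\operatorname{diam} X < \infty$ and $X$ is bounded; in particular we may apply Lemma~\ref{nnhh} with $A = X$. That lemma gives $\operatorname{diam} X = \sup\{d(p,a) : a \in X\}$. Now I need the supremum on the right to be attained. This is the one place that needs a small argument rather than a direct citation: choose $x_0, y_0 \in X$ with $d(x_0,y_0) = \operatorname{diam} X$ (possible because $\operatorname{diam} X \in D(X)$), and apply the strong triangle inequality, $\operatorname{diam} X = d(x_0,y_0) \leq \max\{d(p,x_0), d(p,y_0)\}$. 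Since both $d(p,x_0)$ and $d(p,y_0)$ are at most $\operatorname{diam} X$ by definition of the diameter, the maximum equals $\operatorname{diam} X$, so at least one of $x_0$, $y_0$ realizes distance $\operatorname{diam} X$ from $p$. Hence $\operatorname{diam} X \in D_p(X)$.

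Since $p$ was arbitrary, $\operatorname{diam} X \in \bigcap_{p\in X} D_p(X) = C(X)$ by Proposition~\ref{ugrdd}, which is the claim. I do not expect a genuine obstacle here: the only subtlety is noticing that Lemma~\ref{nnhh} alone gives the sup but not its attainment, so one must feed in the hypothesis $\operatorname{diam} X \in D(X)$ together with the strong triangle inequality to pin down an actual realizing point — exactly the short computation above. Everything else is bookkeeping via the two cited propositions.
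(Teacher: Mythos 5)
Your proof is correct and matches the paper's intended argument: the paper gives no written proof, merely asserting that Lemma~\ref{nnhh} and Proposition~\ref{ugrdd} imply the corollary, and your strong-triangle-inequality step (showing that one of the two points realizing $\operatorname{diam} X$ must also realize it from an arbitrary $p$) is exactly the attainment argument needed to bridge those two cited results. You are right that Lemma~\ref{nnhh} alone only yields the supremum, not its attainment, and your short computation correctly supplies that missing piece.
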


\begin{Lemma}\label{iixx}
Let $T = T(l)$ be a labeled tree with a non-degenerate labeling 
\(
l \colon V(T) \to \mathbb{R}^{+}
\) satisfying the equality
\begin{equation}
    \label{er00}
D(V(T)) = \{ l(u) : u \in V(T) \}.
\end{equation}
Then
\begin{equation}
    \label{er01}
    \alpha\notin C(X)
\end{equation}
holds whenever 
\begin{equation}
    \label{er03}
0<\alpha<\operatorname{diam}V(T).
\end{equation}
\end{Lemma}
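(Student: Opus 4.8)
The plan is, for each $\alpha$ satisfying \eqref{er03}, to exhibit a single point $p\in V(T)$ for which the equation $d_l(p,x)=\alpha$ has no solution $x\in V(T)$; once this is done, \eqref{er01} follows at once, because Proposition~\ref{ugrdd} gives $C(X)=\bigcap_{q\in V(T)}D_q(V(T))\subseteq D_p(V(T))$ and $\alpha\notin D_p(V(T))$ by construction. (Here $(X,d)$ denotes the ultrametric space $(V(T),d_l)$.)

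The first step is to reinterpret hypothesis \eqref{er00} in terms of the diameter. By \eqref{lokias} we have $\operatorname{diam}V(T)=\sup D(V(T))$, and \eqref{er00} rewrites this as $\operatorname{diam}V(T)=\sup\{l(u):u\in V(T)\}$. Hence the assumption $\alpha<\operatorname{diam}V(T)$ in \eqref{er03} forces, directly from the definition of the supremum, the existence of a vertex $p\in V(T)$ with $l(p)>\alpha$.

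The second step is to verify that this $p$ is the desired witness. If $x=p$, then $d_l(p,x)=0<\alpha$. If $x\neq p$, then $p$ is one of the vertices of the path $P_{p,x}$, so \eqref{e11.3} yields
\[
d_l(p,x)=\max_{w\in V(P_{p,x})}l(w)\ \geq\ l(p)\ >\ \alpha .
\]
In either case $d_l(p,x)\neq\alpha$, so $\alpha\notin D_p(V(T))$, and the inclusion from the first paragraph completes the proof.

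I do not expect a genuine obstacle. The only point requiring a little care is that $\operatorname{diam}V(T)$ may be infinite or may fail to be attained by any label, so one must not try to use a vertex whose label equals $\operatorname{diam}V(T)$; invoking the supremum characterization to produce a vertex with label \emph{strictly greater} than $\alpha$ is what makes the argument work uniformly. Non-degeneracy of $l$ plays no role here beyond guaranteeing, via Theorem~\ref{t11.9}, that $(V(T),d_l)$ is an ultrametric space at all.
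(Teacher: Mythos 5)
Your proof is correct and follows essentially the same route as the paper: both arguments use \eqref{lokias} and \eqref{er00} to produce a vertex $p$ (the paper's $v^{*}$) with $l(p)>\alpha$, observe via \eqref{e11.3} that $d_l(p,x)\ge l(p)>\alpha$ for all $x\ne p$, and conclude $\alpha\notin D_p(V(T))$, hence $\alpha\notin C(X)$ by Proposition~\ref{ugrdd}. Your explicit handling of the case $x=p$ and the remark about the supremum possibly not being attained are minor elaborations of the same argument.
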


\begin{proof}

Let us consider an arbitrary $\alpha$ satisfying \eqref{er03}.
Then, using \eqref{lokias}, \eqref{er00}, and \eqref{er03}, we can find a vertex $v^{*}\in V(T)$ such that
\begin{equation}
    \label{ks}
\alpha<l(v^{*}).
\end{equation}
Write
\[
D_{v^{*}}(V(T)):=\{d_{l}(v^{*},u):u\in V(T)\}.
\]
Equality \eqref{e11.3} implies the inequality
\[
d_{l}(v^{*},u)\geq l(v^{*})
\]
for each $u\in V(T)\setminus\{v^{*}\}$.
Hence $\alpha$ does not belong to the set $D_{v^{*}}(V(T))$ by \eqref{ks}.
Consequently \eqref{er01} holds by Proposition~\ref{ugrdd}.
\end{proof}

The next theorem completely describes the center of distances for ultrametric spaces $(X,d)\in{\bf UT}$.

\begin{Theorem}\label{dggkq}
Let $(X,d) \in {\bf UT}$ have at least two points, let $D(X)$ be the distance set of $(X,d)$, and let $C(X)$ be the center of distances of $(X,d)$.
Then
 we have either
\begin{equation}\label{jy1}
    C(X)=\{0\},
\end{equation}
or
\begin{equation}\label{jy2}
    C(X)=\{0,\operatorname{diam}X\}
\end{equation}
and, moreover, the last equality holds if and only if 
\begin{equation}
    \label{er0}
\operatorname{diam} X \in D(X).
\end{equation}


\end{Theorem}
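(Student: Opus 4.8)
The plan is to reduce the statement, via isometry invariance, to a conveniently normalised labeled tree, and then to combine Lemma~\ref{iixx} with Corollary~\ref{rep} and Proposition~\ref{xxxy}.

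First I would observe that $D(X)$, $\operatorname{diam} X$ and $C(X)$ are invariant under isometries --- this is immediate from Definition~\ref{zcgh572} and formula~\eqref{tghii11}, since an isometry $\Phi\colon X\to Y$ carries $\{d(p,x):x\in X\}$ onto $\{\rho(\Phi(p),y):y\in Y\}$ for each $p\in X$ and is a bijection. Hence it suffices to prove the theorem for a space of the form $(V(T),d_l)$, where $T=T(l)$ is a labeled tree with a non-degenerate labeling $l$. Next I would apply Lemma~\ref{lem1} to replace $l$ by the labeling $l^{*}$ it produces; since $d_{l^{*}}=d_l$, this changes none of $D(V(T))$, $\operatorname{diam} V(T)$, $C(V(T))$, and it lets me assume from now on that $D(V(T))=\{l(u):u\in V(T)\}$, which is exactly the hypothesis of Lemma~\ref{iixx}.

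With this normalisation the argument is short. Proposition~\ref{xxxy} gives $0\in C(V(T))$; Lemma~\ref{iixx} gives that $C(V(T))$ contains no $\alpha$ with $0<\alpha<\operatorname{diam} V(T)$; and Definition~\ref{zcgh572} together with \eqref{tghii11} and \eqref{lokias} gives $C(V(T))\subseteq D(V(T))$ with every element of $D(V(T))$ at most $\operatorname{diam} V(T)$. Combining these, $C(V(T))\setminus\{0\}$ is contained in $D(V(T))\cap\{\operatorname{diam} V(T)\}$. I would then split into two cases. If $\operatorname{diam} V(T)\notin D(V(T))$, this intersection is empty and $C(V(T))=\{0\}$, which is \eqref{jy1}. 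If $\operatorname{diam} V(T)\in D(V(T))$, then Corollary~\ref{rep} yields $\operatorname{diam} V(T)\in C(V(T))$, hence $C(V(T))=\{0,\operatorname{diam} V(T)\}$; here $\operatorname{diam} V(T)>0$ because $|V(T)|\geq 2$, so the two listed points are distinct and \eqref{jy2} holds. Transporting this dichotomy back along the isometry also settles the ``if and only if'' clause: \eqref{jy2} holds exactly when $\operatorname{diam} X\in D(X)$, the forward implication being trivial since then $\operatorname{diam} X\in C(X)\subseteq D(X)$.

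I do not expect a genuine obstacle once Lemmas~\ref{lem1} and~\ref{iixx} are granted; the two points needing care are the isometric reduction to a tree with $D(V(T))=\{l(u):u\in V(T)\}$, so that Lemma~\ref{iixx} applies verbatim, and the unbounded case, where $\operatorname{diam} V(T)=+\infty$ automatically fails to belong to $D(V(T))\subseteq\mathbb{R}^{+}$, so that Lemma~\ref{iixx} excludes every positive real from $C(V(T))$ and \eqref{jy1} holds.
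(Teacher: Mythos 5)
Your proposal is correct and follows essentially the same route as the paper: reduce by isometry to a labeled tree normalised via Lemma~\ref{lem1}, then combine Proposition~\ref{xxxy}, Lemma~\ref{iixx}, and Corollary~\ref{rep} to pin $C(X)$ down to $\{0\}$ or $\{0,\operatorname{diam}X\}$ according to whether $\operatorname{diam}X\in D(X)$. Your case split is organised slightly more cleanly than the paper's (which first proves the equivalence \eqref{jy2}$\Leftrightarrow$\eqref{er0} and then handles the complementary case), but the key ingredients and their roles are identical.
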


\begin{proof} Let us prove the equivalence \eqref{jy2}
$\Leftrightarrow$ \eqref{er0}.

Since $C(X)$ is a subset of $D(X)$, equality \eqref{jy2} implies membership relation \eqref{er0}.

Suppose now that we have \eqref{er0}. Then Corollary~\ref{rep} implies
\(
\operatorname{diam}X\in C(X).
\)
Proposition~\ref{xxxy} gives us
\(
0\in C(X).
\)
Consequently, the inclusion
\begin{equation}
    \label{tt}
C(X)\supseteq\{0,\operatorname{diam}X\}
\end{equation}
is valid.
Since $(X,d)$ belongs to the class ${\bf UT}$, there exists a labeled tree $T=T(l)$  such that $(X,d)$ and $(V(T),d_l)$ are isometric. By Lemma~\ref{lem1} we can also suppose that
\begin{equation}
    \label{27-5.01}
D(V(T))=\{\,l(u)\colon u\in V(T)\,\}.
\end{equation}
Then using Lemma~\ref{iixx} we obtain
\(
C(V(T)) \subseteq \{0,\operatorname{diam} V(T)\}.
\)
Since $(X,d)$ and $(V(T),d_l)$ are isometric, the last inclusion gives the inclusion
\begin{equation}
    \label{svb}
C(X) \subseteq \{0,\operatorname{diam} X\}.
\end{equation}
Equality~\eqref{jy2} follows from \eqref{tt} and \eqref{svb}. Thus the 
equivalence \eqref{jy2}
$\Leftrightarrow$ \eqref{er0} is valid.

Let us now assume that equality \eqref{jy2} does not hold.
Then, as was shown above, the formula
\begin{equation}
    \label{t1}
\operatorname{diam} X \notin D(V(T)) 
\end{equation}
is valid. Let \(T = T(l)\) be a labeled tree such that \((X,d)\)
and \((V(T),d_l)\) are isometric and equality \eqref{27-5.01} holds.
Then using Proposition~\ref{xxxy}, Lemma \ref{iixx} and \eqref{t1} we see that
\begin{equation}
    \label{t2}
C(V(T)) = \{0\}.
\end{equation}
Since \((X,d)\) and \((V(T),d_l)\)
are isometric, \eqref{jy1} follows from \eqref{t2}.

The proof is completed. 
\end{proof}

\begin{Remark}
If $(X,d)$ contains exactly one point, then we have
\(
\operatorname{diam} X = 0
\)
and, consequently, equalities \eqref{jy1} and \eqref{jy2} are equivalent.
\end{Remark}

\begin{Example}[The Delhommé-Laflamme-Pouzet-Sauer ultrametric]\label{xi}
 Let us define a mapping
\(
d^{+} : \mathbb{R}^{+} \times \mathbb{R}^{+} \to \mathbb{R}^{+}
\)
as
\begin{equation*}
d^{+}(p,q):=
\begin{cases}
\max\{p,q\}, & \text{if } p\neq q,\\
0, & \text{otherwise}.
\end{cases}
\end{equation*}
Then $d^{+}$ is an ultrametric on $\mathbb{R}^{+}$ and the inequality
\[
d^{+}(0,p)\le d(p,q)
\]
holds whenever $0\neq p\neq q$. The last inequality and Theorem~2.1 from \cite{DR2025USGbLSG} imply the existence of a labeled star graph $S=S(l)$ such that $V(S)=\mathbb R^+$ and $d^+=d_l$, where $d_l$ is an ultrametric defined by \eqref{e11.3} with $T=S$. Thus $(\mathbb{R}^+, d^+)$ is an {\bf UT}-space.
Since the ultrametric space $(\mathbb{R}^+, d^+)$ is unbounded, Theorem~\ref{dggkq} implies that the center of distances of $(\mathbb{R}^+, d^+)$ contains the unique point $0$, 
\[
C(\mathbb{R}^+) = \{0\}.
\]
\end{Example}

\begin{Remark}
The ultrametric $d^+$ on $\mathbb{R}^+$ was introduced by Delhommé, Laflamme, Pouzet, and Sauer in \cite{DLPS2008TaiA}.
\end{Remark}

\section{Center of distances and centered spheres}\label{sec4}

The next concept was introduced in \cite{DK2022LTGCCaDUS}.

\begin{Definition}\label{gla83}
Let $(X,d)$ be a metric space.
A set $S \subseteq X$ is a centered sphere in $(X,d)$ if there exist
$c \in S$, the center of $S$, and $r \in \mathbb{R}^+$, the radius of $S$, such that
\begin{equation}
    \label{ijgt}
S = \{x \in X : d(x,c) = r\} \cup \{c\}.
\end{equation}
\end{Definition}

If $r=0$, then \eqref{ijgt} gives the equality $S=\{c\}$. Hence the following proposition holds.

\begin{Proposition}\label{sat}
Let $(X,d)$ be a metric space. Then the singleton $\{c\}$ is a centered sphere in $(X,d)$ for every $c\in X$.
\end{Proposition}

\begin{Theorem}\label{shak1}
Let $(X,d)\in {\bf  UT}$ have at least two points and let $C(X)$ be the center of distances of $(X,d)$.
Then the equality
\begin{equation}
    \label{ks1-1}
C(X)=\{0,\operatorname{diam}X\}
\end{equation}
holds if and only if $X$ is a centered sphere in $(X,d)$ and $\operatorname{diam} X$ is the radius of this sphere.
\end{Theorem}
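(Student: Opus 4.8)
The plan is to reduce the statement, via Theorem~\ref{dggkq}, to an equivalence between the membership relation $\operatorname{diam}X\in D(X)$ and the assertion that $X$ is a centered sphere of radius $\operatorname{diam}X$. Indeed, by Theorem~\ref{dggkq} equality~\eqref{ks1-1} is equivalent to $\operatorname{diam}X\in D(X)$, so it suffices to show that, for a ${\bf UT}$-space $(X,d)$ with $|X|\geq 2$,
\[
\operatorname{diam}X\in D(X)\iff X\text{ is a centered sphere in }(X,d)\text{ with radius }\operatorname{diam}X.
\]
The backward implication is immediate: if $X$ is a centered sphere with center $c$ and radius $r=\operatorname{diam}X$, then since $|X|\geq 2$ there is some $x\in X\setminus\{c\}$, and then $d(x,c)=r=\operatorname{diam}X$, so $\operatorname{diam}X\in D(X)$.

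For the forward implication, suppose $\operatorname{diam}X\in D(X)$. The key step is to produce a single point $c\in X$ that is at distance exactly $\operatorname{diam}X$ from \emph{every} other point of $X$; such a $c$ serves as the center of the desired centered sphere with radius $\operatorname{diam}X$, and then $X=\{x\in X:d(x,c)=\operatorname{diam}X\}\cup\{c\}$ by Lemma~\ref{nnhh} (every point is within $\operatorname{diam}X$ of $c$, and actually equals $\operatorname{diam}X$ unless it is $c$ itself). To find such a $c$, I would pass to an isometric copy $(V(T),d_l)$ with $T=T(l)$ a labeled tree carrying a non-degenerate labeling, and by Lemma~\ref{lem1} I may assume $D(V(T))=\{l(u):u\in V(T)\}$. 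Since $\operatorname{diam}V(T)\in D(V(T))$, there is a vertex $c$ with $l(c)=\operatorname{diam}V(T)$; by~\eqref{e11.3}, for every $u\neq c$ the path $P_{c,u}$ contains $c$, so $d_l(c,u)=\max_{w\in V(P_{c,u})}l(w)\geq l(c)=\operatorname{diam}V(T)$, and the reverse inequality holds trivially. Hence $d_l(c,u)=\operatorname{diam}V(T)$ for all $u\neq c$, which is exactly what is needed.

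The main obstacle is the forward direction, and specifically the passage through the labeled-tree model: one needs the normalization from Lemma~\ref{lem1} to guarantee that $\operatorname{diam}$ is actually realized as a \emph{label} (not merely as a supremum of distances), which is what makes the central vertex $c$ available. Once $c$ is in hand, transporting the conclusion back along the isometry to $(X,d)$ is routine, since isometries preserve distances, diameters, and hence the property of being a centered sphere of a given radius. I would also note in passing the degenerate remark that when $|X|=1$ both sides are vacuous (there $\operatorname{diam}X=0$ and $X=\{c\}$ is trivially a centered sphere of radius $0$), though the hypothesis $|X|\geq 2$ means this case need not be treated.
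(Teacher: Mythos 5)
Your proposal is correct and follows essentially the same route as the paper: both reduce equality~\eqref{ks1-1} to the membership $\operatorname{diam}X\in D(X)$ via Theorem~\ref{dggkq}, and both use Lemma~\ref{lem1} to normalize the labeling so that $\operatorname{diam}V(T)$ is realized as a label $l(u^*)$, whence formula~\eqref{e11.3} yields $d_l(u^*,u)=\operatorname{diam}V(T)$ for all $u\neq u^*$ and the isometry transports the centered-sphere structure back to $(X,d)$. The only cosmetic difference is in the easy direction, where the paper derives $r=\operatorname{diam}X$ from the strong triangle inequality while you read it off from the hypothesis; both are valid.
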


\begin{proof}
Let $X$ be a centered sphere in $(X,d)$.
Then, by Definition~\ref{gla83}, there are $c\in X$ and $r\in \mathbb{R}^+$ such that
\begin{equation}
    \label{qeszf}
X=\{x\in X : d(x,c)=r\}\cup\{c\}. 
\end{equation}
The last equality and the strong triangle inequality imply
\[
d(x,y)\leq \max\{d(x,c),d(c,y)\}\leq r
\]
for all $x,y\in X$.
Consequently we have
\(
\operatorname{diam} X = r 
\)
and
\(
\operatorname{diam} X \in D(X), 
\)
where $D(X)$ is the distance set of $(X,d)$.
Hence \eqref{ks1-1}  follows from Theorem~\ref{dggkq}.

Now we must show that \eqref{ks1-1}  implies \eqref{qeszf}.
Let us do it.
Since $(X,d)\in\bf UT$ holds, there is a labeled tree $T=T(l)$ such that $(X,d)$ and $(V(T),d_l)$ are isometric.
Consequently, we have the equalities
\[
D(X)=D(V(T)), \quad C(X)=C(V(T))
\]
and can rewrite equality \eqref{ks1-1}  as
\begin{equation}
    \label{rvguq}
C(V(T))=\{0,\operatorname{diam} V(T)\}. 
\end{equation}
By Lemma \ref{lem1} we may also suppose that
\begin{equation}
    \label{er1}
D(V(T))=\{l(u)\colon u\in V(T) \}.
\end{equation}
It follows from \eqref{rvguq} that
the membership relation
\[
\operatorname{diam} V(T) \in D(V(T))
\]
holds, and, consequently, by \eqref{er1} there exists \( u^{*} \in V(T) \) such that
\begin{equation}
    \label{er11}
\operatorname{diam}V(T)=l(u^{*}) \geq l(u) 
\end{equation}
for each \( u \in V(T) \).

Using formulas \eqref{e11.3} and  \eqref{er11}, it is easy to prove that
\[
d_l(u^*,u)=l(u^*)
\]
holds for each $u\in V(T)\setminus\{u^*\}.$
The last statement implies  \eqref{qeszf} with
\(
c=\Phi(u^*) \) and \(r=\operatorname{diam} X,
\)
where $\Phi(u^*)$ is the image of the point $u^*\in V(T)$ under some isometry
\(
\Phi:V(T)\to X
\)
of the spaces $(V(T),d_l)$ and $(X,d)$.

The proof is completed.
\end{proof}

Our next goal is to describe some conditions under which every open ball of a space $(X,d)\in {\bf UT}$ is a centered sphere in this space.

The following lemma is a reformulation of Lemma~4.3 from \cite{DK2024DLPSSAG}.

\begin{Lemma}\label{scgkk}
Let $(X,d)\in \mathbf{UT}$ and let $T=T(l)$ be a labeled tree such that $(X,d)$ and $(V(T),d_l)$ are isometric.
Then, for every $B^{1}\in {\bf B}_{X}$ there is a subtree $T^{1}$ of $T$ such that $
(B^{1},d\big|_{B^{1}\times B^{1}})$
and
$(V(T^1), d_{l^{1}})$ are isometric,
 where $l^{1}$ is the restriction of the labeling
$l : V(T) \to \mathbb{R}^{+}$ on the set $V(T^{1})$.
\end{Lemma}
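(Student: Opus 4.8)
The plan is to reduce the statement to a claim about open balls in the labeled tree $T$ itself and then exploit the uniqueness of paths in a tree. First I would fix an isometry $\Phi\colon V(T)\to X$ of $(V(T),d_l)$ and $(X,d)$. Given $B^{1}\in{\bf B}_{X}$, write $B^{1}=B_{r}(c)$ and put $v_{0}:=\Phi^{-1}(c)$; since $\Phi$ is an isometry, $\Phi^{-1}(B^{1})$ equals the open ball $B:=B_{r}(v_{0})$ of $(V(T),d_{l})$, and $\Phi$ restricts to an isometry of $(B^{1},d|_{B^{1}\times B^{1}})$ onto $(B,d_{l}|_{B\times B})$. Hence it suffices to produce a subtree $T^{1}\subseteq T$ with $V(T^{1})=B$ such that $d_{l^{1}}=d_{l}|_{B\times B}$, where $l^{1}:=l|_{B}$.

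The key step is to show that the subgraph $T^{1}$ of $T$ induced by the vertex set $B$ is connected; being a subgraph of the tree $T$ it contains no cycle, so it is then a subtree with $V(T^{1})=B$. Let $u$, $v$ be distinct vertices of $B$ and let $P_{u,v}=(u=x_{1},\dots,x_{k}=v)$ be the path joining them in $T$. Because $T$ is a tree, there is a vertex $m$ (the median of $v_{0}$, $u$, $v$) with $P_{v_{0},u}=P_{v_{0},m}\cup P_{m,u}$, $P_{v_{0},v}=P_{v_{0},m}\cup P_{m,v}$ and $P_{u,v}=P_{u,m}\cup P_{m,v}$. Consequently every $x_{i}$ lies on $P_{m,u}\subseteq P_{v_{0},u}$ or on $P_{m,v}\subseteq P_{v_{0},v}$, so the unique path $P_{v_{0},x_{i}}$ is contained in $P_{v_{0},u}$ or in $P_{v_{0},v}$; by the defining formula \eqref{e11.3} this gives
\begin{equation*}
d_{l}(v_{0},x_{i})\leq\max\{d_{l}(v_{0},u),d_{l}(v_{0},v)\}<r,
\end{equation*}
i.e.\ $x_{i}\in B$. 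Since all vertices, and hence all edges, of $P_{u,v}$ belong to $T^{1}$, the vertices $u$ and $v$ are joined by a path in $T^{1}$, which proves connectedness.

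It remains to compare the metrics. As $T^{1}$ is a subtree of $T$ containing $u$ and $v$ and paths in a tree are unique, the path joining $u$ and $v$ in $T^{1}$ is exactly $P_{u,v}$, so
\begin{equation*}
d_{l^{1}}(u,v)=\max_{w\in V(P_{u,v})}l^{1}(w)=\max_{w\in V(P_{u,v})}l(w)=d_{l}(u,v)
\end{equation*}
for all distinct $u$, $v\in V(T^{1})$; thus $d_{l^{1}}=d_{l}|_{B\times B}$. Note also that $l^{1}$ is non-degenerate, since every edge of $T^{1}$ is an edge of $T$ and $l$ is non-degenerate, so $(V(T^{1}),d_{l^{1}})$ is indeed an ultrametric space by Theorem~\ref{t11.9}. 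Composing with $\Phi$ now yields the asserted isometry of $(B^{1},d|_{B^{1}\times B^{1}})$ and $(V(T^{1}),d_{l^{1}})$. The only genuine obstacle is the connectedness claim — equivalently, that balls in a labeled tree are convex along tree paths; once this is in hand, the rest is routine bookkeeping.
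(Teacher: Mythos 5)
Your proof is correct. Note that the paper itself does not prove this lemma at all: it is stated as a reformulation of Lemma~4.3 from the cited work \cite{DK2024DLPSSAG}, so there is no in-paper argument to compare against. What you supply is a complete, self-contained derivation, and its two pillars are sound: (1) the ``path-convexity'' of open balls in $(V(T),d_l)$, which you obtain from the median vertex $m$ of $v_0$, $u$, $v$ together with the monotonicity of $\max_{w\in V(P)}l(w)$ under passing to subpaths, so that every vertex of $P_{u,v}$ stays within distance $\max\{d_l(v_0,u),d_l(v_0,v)\}<r$ of $v_0$; and (2) the uniqueness of paths in a tree, which guarantees that the induced subgraph on the ball computes the same ultrametric via \eqref{e11.3}. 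The degenerate cases ($u=v_0$, or $B$ a singleton) are harmless, and your observation that $l^1$ inherits non-degeneracy from $l$ correctly ensures $(V(T^1),d_{l^1})$ is again an ultrametric space via Theorem~\ref{t11.9}. The only stylistic remark is that the containment $d_l(v_0,x_i)\leq\max\{d_l(v_0,u),d_l(v_0,v)\}$ could alternatively be extracted from the strong triangle inequality combined with $d_l(u,x_i)\leq d_l(u,v)$ (since $P_{u,x_i}\subseteq P_{u,v}$), but your median decomposition achieves the same end and makes the tree geometry explicit.
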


We will also use the next lemma.
\begin{Lemma}\label{dhiu}
     Let $(X,d)$ be an ultrametric space and let $B$ be an open ball in $(X,d)$. Then $B$ is a centered sphere in $(X,d)$ if and only if this ball is a centered sphere in $(B, d|_{B\times B})$.
\end{Lemma}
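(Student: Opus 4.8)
The plan is to prove both implications directly from Definition~\ref{gla83}, using Proposition~\ref{dj} to control how open balls and centers interact. Observe first that the ambient ball $B$ and the subspace $(B, d|_{B\times B})$ share the same metric on the points of $B$, so any equality of the form $S = \{x : d(x,c) = r\} \cup \{c\}$ that holds inside the subspace automatically describes a subset of $X$; the only subtlety is whether the set $\{x \in X : d(x,c) = r\}$ can pick up points outside $B$. This is precisely where ultrametricity enters.

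For the nontrivial direction, suppose $B$ is a centered sphere in $(B, d|_{B\times B})$ with center $c \in B$ and radius $r \in \mathbb{R}^+$, so that $B = \{x \in B : d(x,c) = r\} \cup \{c\}$. I would first dispose of the case $r = 0$, where $B = \{c\}$ is a singleton and hence a centered sphere in $(X,d)$ by Proposition~\ref{sat}. Otherwise $r > 0$. Let $\rho$ be the radius of $B$ as an open ball of $(X,d)$; by Proposition~\ref{dj} we may take $c$ itself as its center, so $B = B_\rho(c)$. I claim $B = \{x \in X : d(x,c) = r\} \cup \{c\}$. The inclusion $\subseteq$ is immediate from the subspace statement. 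For $\supseteq$, take $x \in X$ with $d(x,c) = r$; since $B$ is not a singleton there is some $y \in B$ with $d(y,c) = r < \rho$, and then the strong triangle inequality gives $d(x,c) \le \max\{d(x,y), d(y,c)\}$ — but more to the point, $r = d(x,c) < \rho$ forces $x \in B_\rho(c) = B$. Hence $B$ is a centered sphere in $(X,d)$.

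Conversely, suppose $B$ is a centered sphere in $(X,d)$, say $B = \{x \in X : d(x,c) = r\} \cup \{c\}$ with $c \in B$ and $r \in \mathbb{R}^+$. Every point $x$ appearing on the right-hand side lies in $B$, so in particular $c \in B$ and the set $\{x \in X : d(x,c) = r\}$ is contained in $B$; therefore it coincides with $\{x \in B : d(x,c) = r\}$, and intersecting the defining equality with $B$ (which changes nothing) yields $B = \{x \in B : d(x,c) = r\} \cup \{c\}$, i.e. $B$ is a centered sphere in $(B, d|_{B\times B})$ with the same center and radius.

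The only real obstacle is the $\supseteq$ direction above: one must rule out a point $x \in X \setminus B$ with $d(x,c) = r$. The key observation making this impossible is that $B$ is an \emph{open ball} of radius $\rho$, and a centered sphere $B$ with more than one point must have $r < \rho$ (indeed $r$ equals $d(y,c)$ for some $y \in B = B_\rho(c)$, so $r < \rho$); hence $d(x,c) = r < \rho$ already places $x$ in $B_\rho(c) = B$, a contradiction. (If instead $B$ is a singleton, the statement is trivial.) Once this point is isolated, the rest is bookkeeping with Definition~\ref{gla83} and Proposition~\ref{dj}.
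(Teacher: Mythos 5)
Your proof is correct and follows essentially the same route as the paper's: the forward direction is the same set-theoretic observation that the sphere in $X$ already equals $B$ and hence cannot contain points outside $B$, and the reverse direction likewise recenters $B$ at $c$ via Proposition~\ref{dj} and uses the key inequality $r<\rho$ to show that $\{x\in X: d(x,c)=r\}$ stays inside $B$. If anything, you are slightly more careful than the paper in the singleton case, where you invoke Proposition~\ref{sat} explicitly rather than deducing $r<\rho$ from the subspace equality (a step that implicitly requires $B$ to contain a point other than $c$).
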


\begin{proof}
By Definition~\ref{gla83}, a ball $B \in {\bf B}_X$ is a centered sphere in $(X,d)$
if and only if the equality
\begin{equation}
    \label{qp}
B = \{ x \in X \colon d(x,c_1)=r_1 \}\cup\{c_1\}
\end{equation}
holds for some $c_1 \in B$ and $r_1 \in \mathbb{R}^+.$
Similarly, $B$ is a centered sphere in $(B,d|_{B\times B})$
if and only if the equality
\begin{equation}\label{eq:BsphereB}
B=\{x\in B:d(x,c_2)=r_2\}\cup\{c_2\}
\end{equation}
holds
for some $c_2\in B$ and $r_2\in\mathbb{R}^+$.

Let \eqref{qp} hold. It follows from $B\subseteq X$ that
\begin{equation}\label{ss}
\{x\in X : d(x,c_1)=r_1\}\cup \{c_1\} \supseteq \{x\in B : d(x,c_1)=r_1\}\cup \{c_1\}.
\end{equation}
If the last inclusion is strict, then there is $p\in X$ satisfying
\[
d(p,c_1)=r_1 \quad \text{and} \quad p\notin B,
\]
that contradicts \eqref{qp}. Hence \eqref{qp} implies the equality.
\begin{equation*}
\{x\in X:d(x,c_1)=r_1\}\cup\{c_1\}
=
\{x\in B:d(x,c_1)=r_1\}\cup\{c_1\},
\end{equation*}
that gives us \eqref{eq:BsphereB}
with $r_2=r_1$ and $c_2=c_1$.
Thus if $B$ is a centered sphere in $(X,d)$, then $B$ also is  a
centered sphere in $(B,d|_{B\times B})$.

Suppose now that equality \eqref{eq:BsphereB} holds.
We claim that the equality
\begin{equation}\label{eq:equal3}
\{x\in X:d(x,c_2)=r_2\}\cup\{c_2\}
=
\{x\in B:d(x,c_2)=r_2\}\cup\{c_2\}
\end{equation}
is satisfied.

Similar to \eqref{ss}, we have  the inclusion
\begin{equation*}
\{x\in X:d(x,c_2)=r_2\}\cup\{c_2\}
\supseteq
\{x\in B:d(x,c_2)=r_2\}\cup\{c_2\}.
\end{equation*}
Consequently, equality~\eqref{eq:equal3} holds if we have
\begin{equation}\label{eq:equal4}
\{x\in X:d(x,c_2)=r_2\}\cup\{c_2\}
\subseteq
\{x\in B:d(x,c_2)=r_2\}\cup\{c_2\}.
\end{equation}
Equality \eqref{eq:BsphereB} implies $c_2\in B$. Hence, by Proposition \ref{dj}, we may assume  
\begin{equation}
    \label{toto}
B = B_r(c_2)=\{x\in X \colon d(x,c_2)<r\}
\end{equation}
 for some \( r > 0 \).
Equalities \eqref{eq:BsphereB} and \eqref{toto} imply the inequality
\(
    r_2<r,
\)
that together with \eqref{toto} gives us
\begin{equation}
    \label{toto1}
\{x\in X \colon d(x,c_2)=r_2\}\cup\{c_2\}\subseteq B.
\end{equation}
Inclusion \eqref{eq:equal4} follows from \eqref{eq:BsphereB} and \eqref{toto1}.

The proof is completed.
\end{proof}

If a subset $S$ of an ultrametric space $(X,d)$ is a centered sphere in itself but $S$ is not a singleton or an open ball in $(X,d)$, then $S$ may not be a centered sphere in $(X,d)$.

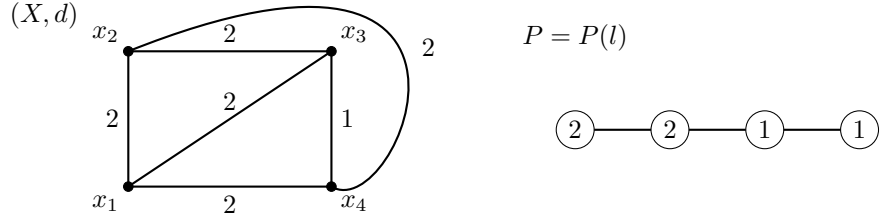
\begin{figure}[h]
\begin{center}
\begin{tikzpicture}[scale=1.8]

\coordinate (x1) at (0,0);
\coordinate (x2) at (0,1);
\coordinate (x3) at (1.5,1);
\coordinate (x4) at (1.5,0);

\draw[line width=0.8pt] (x1) -- (x2) -- (x3) -- (x4) -- cycle;
\draw[line width=0.8pt] (x2) .. controls (3.0,2.2) and (2.0,-0.3) .. (x4);
\draw[line width=0.8pt] (x1) -- (x3);

\filldraw[black] (x1) circle (1pt);
\filldraw[black] (x2) circle (1pt);
\filldraw[black] (x3) circle (1pt);
\filldraw[black] (x4) circle (1pt);

\node[below left]  at (x1) {$x_1$};
\node[above left]  at (x2) {$x_2$};
\node[above right] at (x3) {$x_3$};
\node[below right] at (x4) {$x_4$};

\node[left]  at ($(x1)!0.5!(x2)$) {$2$};
\node[above] at ($(x2)!0.5!(x3)$) {$2$};
\node[right] at ($(x3)!0.5!(x4)$) {$1$};
\node[below] at ($(x4)!0.5!(x1)$) {$2$};

\node[above] at ($(x1)!0.5!(x3)$) {$2$};

\node[above right] at (2.1,0.9) {$2$};
\node[above left] at (-0.3,1.1) {$(X,d)$};

\coordinate (p1) at (3.3,0.42);
\coordinate (p2) at (4.0,0.42);
\coordinate (p3) at (4.7,0.42);
\coordinate (p4) at (5.4,0.42);

\draw[line width=0.8pt] (p1) -- (p2) -- (p3) -- (p4);

\draw[fill=white] (p1) circle (0.14);
\draw[fill=white] (p2) circle (0.14);
\draw[fill=white] (p3) circle (0.14);
\draw[fill=white] (p4) circle (0.14);

\node at (p1) {$2$};
\node at (p2) {$2$};
\node at (p3) {$1$};
\node at (p4) {$1$};

\node at (3.3,1.1) {$P=P(l)$};

\end{tikzpicture}
\caption{The space $(X,d)$ is isometric to the ultrametric space $(V(P),d_l)$ generated by labeled path $P=P(l)$.}
\label{fig1}
\end{center}
\end{figure}

\begin{Example}
Let us consider the ultrametric space $(X,d)$ depicted by Figure~\ref{fig1}. Then $(X,d)\in {\bf UT}$ and the set $\{x_1,x_2\}$ is a centered sphere in itself
but this set is not a centered sphere in $(X,d)$.  
Indeed, the equality $d(x_1,x_2)=2$ and the definition of $(X,d)$ give us the equalities
\begin{equation*}
\{x\in X \colon d(x,x_1)=2\}\cup\{x_1\}
= \{x_1,x_2,x_3,x_4\}
= \{x\in X \colon d(x,x_2)=2\}\cup\{x_2\}.
\end{equation*}
\end{Example}

In what follows we denote  ${\bf Cs}_X$  the set of all centered spheres of an ultrametric space $(X,d)$.

\begin{Theorem}\label{efhik}
Let $(X,d)$ be an ultrametric space and let 
\begin{equation}
    \label{fesdi6-1}
D_p(X) := \{ d(p,x) : x \in X \}
\end{equation}
for each $p\in X$.
If the inclusion 
\begin{equation}
    \label{ed1}
{\bf B}_X \subseteq {\bf Cs}_X
\end{equation}
holds, then the set
$D_p(X)\cap [0,r)$ has the greatest element for all $p\in X$ and $r\in (0,\infty)$. Moreover, if $(X,d)\in {\bf UT}$ and the set $D_p(X)\cap [0,r)$ has the greatest element for all $p\in X$ and $r\in (0,\infty)$, then inclusion \eqref{ed1} holds.
\end{Theorem}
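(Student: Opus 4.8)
The plan is to prove the two implications in turn: the forward one is elementary, while the converse one reduces to Theorems~\ref{dggkq} and~\ref{shak1}. \emph{Forward implication.} Assume ${\bf B}_X \subseteq {\bf Cs}_X$, fix $p \in X$ and $r \in (0,\infty)$, and apply the hypothesis to the ball $B := B_r(p)$. Since $x \in B$ is equivalent to $d(p,x) < r$, we have $D_p(X)\cap[0,r) = \{d(p,x) : x \in B\}$, whose supremum is $\operatorname{diam} B$ by Lemma~\ref{nnhh}. If $|B| = 1$, this set is $\{0\}$ and there is nothing to prove, so suppose $|B| \geq 2$. Then $B = \{x \in X : d(x,c) = \rho\} \cup \{c\}$ for some $c \in B$ and $\rho > 0$ (positivity of $\rho$ follows from $|B| \geq 2$). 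The strong triangle inequality gives $\operatorname{diam} B \leq \rho$, while any $x_0 \in B \setminus \{c\}$ satisfies $d(x_0,c) = \rho$, so $\operatorname{diam} B = \rho$. It remains to check that $\rho \in D_p(X) \cap [0,r)$, which then makes it the greatest element. If $d(p,x) \neq \rho$ for every $x \in B$, then $d(p,x) < \rho$ for each such $x$ (because $d(p,x) \leq \operatorname{diam} B = \rho$), whence $\rho = d(x_0,c) \leq \max\{d(x_0,p), d(p,c)\} < \rho$, a contradiction; the same inequality, this time with the bounds $d(x_0,p) < r$ and $d(p,c) < r$, gives $\rho < r$.

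\emph{Converse implication.} Assume $(X,d) \in {\bf UT}$ and that $D_p(X) \cap [0,r)$ has a greatest element for all $p \in X$ and $r \in (0,\infty)$, and fix $B \in {\bf B}_X$; write $B = B_s(q)$ for some $q \in B$ and $s > 0$. If $|B| = 1$, then $B \in {\bf Cs}_X$ by Proposition~\ref{sat}, so suppose $|B| \geq 2$. By Lemma~\ref{scgkk}, the subspace $(B, d|_{B\times B})$ is isometric to $(V(T^1), d_{l^1})$ for some subtree $T^1$ of a labeled tree $T = T(l)$ with $(V(T),d_l)$ isometric to $(X,d)$, where $l^1$ is the restriction of $l$; since $E(T^1) \subseteq E(T)$, the labeling $l^1$ is non-degenerate, and hence $(B, d|_{B\times B}) \in {\bf UT}$. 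Moreover $\operatorname{diam} B \in D(B)$: indeed $\{d(q,x) : x \in B\} = D_q(X) \cap [0,s)$ because $B = B_s(q)$, this set has a greatest element $m$ by the hypothesis, and $\operatorname{diam} B = m$ by Lemma~\ref{nnhh}.

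Now Theorem~\ref{dggkq}, applied to the ${\bf UT}$-space $(B, d|_{B\times B})$, which has at least two points, yields $C(B) = \{0, \operatorname{diam} B\}$ (the alternative $C(B) = \{0\}$ is excluded by $\operatorname{diam} B \in D(B)$); then Theorem~\ref{shak1} shows that $B$ is a centered sphere in $(B, d|_{B\times B})$, and finally Lemma~\ref{dhiu} transfers this to the ambient space, giving $B \in {\bf Cs}_X$. Since $B$ was arbitrary, ${\bf B}_X \subseteq {\bf Cs}_X$ follows. The hypothesis of the converse is used only to secure $\operatorname{diam} B \in D(B)$ for an arbitrary open ball $B$; after that, Theorems~\ref{dggkq} and~\ref{shak1} do all the geometric work. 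I expect the main points needing care to be (a) the passage through Lemmas~\ref{scgkk} and~\ref{dhiu} — verifying that $(B, d|_{B\times B})$ genuinely lies in ${\bf UT}$ and that a centered sphere in this subspace is also one in $X$ — and (b) in the forward implication, the short strong-triangle-inequality argument ensuring that $\operatorname{diam} B$ is actually realized as a distance $d(p,x)$ rather than only as a supremum.
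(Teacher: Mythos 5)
Your proof is correct, and the converse implication follows the paper's own route step for step: Lemma~\ref{scgkk} to place $(B,d|_{B\times B})$ in ${\bf UT}$, the greatest-element hypothesis plus Lemma~\ref{nnhh} to get $\operatorname{diam}B\in D(B)$, then Theorems~\ref{dggkq} and~\ref{shak1} followed by Lemma~\ref{dhiu} to transfer the centered-sphere property back to $X$. The only genuine divergence is in the forward implication: where you need to know that $\operatorname{diam}B$ is realized as a distance from the given point $p$ (not merely from the center $c$ of the sphere representation), the paper invokes the non-emptiness of the diametrical graph $G_B$ and its complete multipartite structure via Theorem~\ref{ref}, whereas you prove the same fact directly from the strong triangle inequality, deriving the contradiction $\rho=d(x_0,c)\le\max\{d(x_0,p),d(p,c)\}<\rho$. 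Your version is more elementary and self-contained (it also handles the singleton-ball case explicitly, which the paper glosses over, since for a singleton ball $G_B$ is empty and Theorem~\ref{ref} does not literally apply); the paper's version makes the structural reason visible, namely that in an ultrametric space a realized diameter is realized from every point. Both arguments are sound, and your estimate $\rho<r$ via $d(x_0,p),d(p,c)<r$ correctly places the greatest element inside $[0,r)$.
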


\begin{proof}
Let inclusion \eqref{ed1} hold, let $p$ be an arbitrary point of $X$, and let
$r\in(0,\infty)$.
We must show that the set $D_p(X)\cap[0,r)$ has the greatest element. 

Let us consider the open ball
\begin{equation}\label{oo1}
    B:=\{x\in X \colon d(x,p)<r\}.
\end{equation}
 By inclusion \eqref{ed1} the ball $B$ is a centered sphere in $(X,d)$, i.e., the equality
\[
B=\{x\in X\colon \ d(x,p_1)=r_1\}\cup\{p_1\}
\]
holds with some $p_1\in B$ and $r_1\in \mathbb R^+$. The last equality and the Lemma~\ref{nnhh}  with $A=B$ and $p=p_1$ imply 
\[
\operatorname{diam} B=r_1 \quad \text{and} \quad
 \operatorname{diam} B\in D(B),
\]
where $D(B)$ is the distance set of $(B,d|_{B\times B})$.

Consequently, the diametrical graph \(G_B\) of \((B,d|_{B\times B})\) is non-empty.
Theorem \ref{ref} implies that \(G_B\) is a complete multipartite graph.
Hence, by Definition \ref{scfr}, there is a point \(x\in B\) such that
\begin{equation*}
d(p,x)=\operatorname{diam} B.
\end{equation*}
Using \eqref{fesdi6-1}, \eqref{oo1} and formula \eqref{lokias} with \(A=B\), we see that \(\operatorname{diam} B\) is the greatest element of the set \(D_p(X)\cap[0,r)\) as required.

Let $(X,d)\in {\bf UT}$ and let the set $D_p(X)\cap [0,r)$ contain the greatest element for all $p\in X$ and $r\in (0,\infty)$.
Let us consider an arbitrary open ball $B^1\in {\bf B}_X$ with  a center $p_1\in X$ and a radius $r_1>0$,
\begin{equation}
    \label{nn1}
    B^1=\{x\in X \colon d(p_1,x)<r_1\}.
\end{equation}
To prove inclusion \eqref{ed1} we must show that $B^1$ is a centered sphere in $(X,d)$.

If $B^1$ is a singleton, then $B^1\in {\bf Cs}_X$ holds by Proposition~\ref{sat}. 

Suppose now that $B^1$ contains at least two points.
Since $(X,d)$ is an ${\bf UT}$-space, Lemma~\ref{scgkk} implies the existence of a labeled tree
$T^{1}=T^{1}(l_{1})$ such that
$(B^1,d|_{B^{1}\times B^{1}})$ and $(V(T^{1}),d_{l_1})$
are isometric.
Hence the ultrametric space $(B^{1},d|_{B^{1}\times B^{1}})$ also belongs to the class
${\bf UT}$.

By Theorem~\ref{shak1}, the membership
\((B^{1},d|_{B^{1}\times B^{1}})\in {\bf UT}
\)
implies  \(B^{1}\in  {\bf Cs}_{B^1}\)  if and only if
\begin{equation}
    \label{ng1}
C(B^{1})=\{0,\operatorname{diam} B^{1}\},
\end{equation}
where $C(B^1)$ is the center of distances of $(B^1, d|_{B^1\times B^1})$.
By Theorem~\ref{dggkq}, equality \eqref{ng1} is equivalent to 
\begin{equation}
\label{ng2}
\operatorname{diam} B^{1} \in D(B^{1}),
\end{equation}
where \(D(B^{1})\) is the distance set of the space \((B^{1},d|_{B^{1}\times B^{1}})\).
The space \((B^{1},d|_{B^{1}\times B^{1}})\) is a subspace of \((X,d)\).
Hence the inclusion
\[
D(B^{1}) \subseteq D(X)
\]
holds. Let $D_{p_1}(X)$ be the set defined by \eqref{fesdi6-1} with $p=p_1$, where $p_1$ is the center of the ball $B^1$. By supposition the set $D_{p_1}(X)\cap [0,r_1)$  has the greatest element $R_1$. Thus we have 
\begin{equation*}
    R_1\in D_{p_1} (X)\cap [0,r_1) \subseteq D(B^1).
\end{equation*}
Lemma~\ref{nnhh} and \eqref{nn1} imply $R_1=\operatorname{diam}B^1$.
  Thus \eqref{ng2} holds and, consequently, we have
\begin{equation}
    \label{ng3}
 B^1\in {\bf Cs}_{B^{1}},
\end{equation}
by Theorems \ref{dggkq} and \ref{shak1}.
Using Lemma \ref{dhiu} we may rewrite \eqref{ng3} as
\begin{equation}
    \label{ng4}
B^1 \in {\bf Cs}_X. 
\end{equation}
Thus \eqref{ng4} holds for each \(B^{1}\in {\bf B}_X\).
Inclusion \eqref{ed1} follows.

The proof is completed.
\end{proof}


\begin{Corollary}\label{fjqp}
Let $(X,d)\in {\bf UT}$ be totally bounded. Then the inclusion
\[
{\bf B}_X \subseteq {\bf Cs}_X
\]
holds.
\end{Corollary}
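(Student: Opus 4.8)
The plan is to deduce Corollary~\ref{fjqp} directly from Theorem~\ref{efhik} by verifying that a totally bounded space automatically satisfies the hypothesis of the ``moreover'' part of that theorem, namely that $D_p(X)\cap[0,r)$ has a greatest element for every $p\in X$ and every $r\in(0,\infty)$. Since $(X,d)\in{\bf UT}$ is given, once this condition is checked we may invoke Theorem~\ref{efhik} verbatim to conclude ${\bf B}_X\subseteq{\bf Cs}_X$.

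The main step is therefore the following: if $(X,d)$ is totally bounded, then for each $p\in X$ and each $r>0$ the set $D_p(X)\cap[0,r)$ has a greatest element. I would argue as follows. If $X$ is finite the claim is trivial, since any finite nonempty subset of $\mathbb R^+$ has a greatest element (and $0\in D_p(X)\cap[0,r)$ ensures nonemptiness). If $X$ is infinite, then $(X,d)$ is an infinite totally bounded ultrametric space, so Proposition~\ref{zbhfd} applies: the distance set $D(X)$ equals $\{0\}\cup\{x_n:n\in\mathbb N\}$ for a strictly decreasing sequence $(x_n)$ with $x_n\to 0$. Since $D_p(X)\subseteq D(X)$, the set $D_p(X)\cap[0,r)$ is a subset of $\{0\}\cup\{x_n:n\in\mathbb N\}$ that contains $0$; its nonzero elements form a subset of $\{x_n\}$ lying in $(0,r)$. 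Because $x_n\to 0$, only finitely many $x_n$ can fail to lie below any fixed threshold, so among those $x_n$ that actually occur in $D_p(X)\cap[0,r)$ there is a largest index-free maximum — more precisely, $\{x_n : x_n<r\}$ is either empty, in which case $D_p(X)\cap[0,r)=\{0\}$ with greatest element $0$, or nonempty, in which case it has a smallest index $n_0$ and $x_{n_0}=\sup\{x_n:x_n<r\}$ is attained; intersecting with $D_p(X)$ only removes elements, and $\sup\bigl(D_p(X)\cap[0,r)\bigr)$, being a supremum of a subset of $\{0\}\cup\{x_n:n\ge n_0\}$, is again one of the $x_n$ or $0$, hence attained. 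Thus $D_p(X)\cap[0,r)$ has a greatest element in all cases.

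With this verified, the proof concludes: the hypotheses of the second assertion of Theorem~\ref{efhik} hold for $(X,d)$ (it is a ${\bf UT}$-space, and $D_p(X)\cap[0,r)$ has a greatest element for all $p$ and $r$), so inclusion \eqref{ed1}, i.e. ${\bf B}_X\subseteq{\bf Cs}_X$, follows.

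The only delicate point is the last case distinction in the infinite case: one must be careful that passing from $\{x_n:x_n<r\}$ (where a maximum is clearly attained because the sequence decreases to $0$) to the possibly smaller set $D_p(X)\cap[0,r)$ does not destroy attainment of the supremum. This is fine because any subset $S$ of $\{0\}\cup\{x_n:n\in\mathbb N\}$ that is bounded has $\sup S\in S$: if $\sup S=0$ then $S=\{0\}$; otherwise $\sup S=x_m$ for the least $m$ with $x_m\le \sup S$, and since some element of $S$ exceeds $x_{m+1}$, that element must equal $x_m=\sup S$. I expect this tidy observation about subsets of a sequence decreasing to $0$ to be the part worth stating explicitly; everything else is a direct citation of Proposition~\ref{zbhfd} and Theorem~\ref{efhik}.
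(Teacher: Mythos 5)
Your proposal is correct and follows the same route as the paper: the paper's proof also reduces the corollary to the ``moreover'' part of Theorem~\ref{efhik} by citing Proposition~\ref{zbhfd} to get the greatest element of $D_p(X)\cap[0,r)$. You simply spell out the details the paper leaves implicit (the finite case, and the fact that a bounded subset of $\{0\}\cup\{x_n:n\in\mathbb N\}$ with $x_n\downarrow 0$ attains its supremum), and those details are verified correctly.
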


\begin{proof}
Let $p$ be a point of $X$, let
\[
D_p(X):=\{d(p,x): x\in X\},
\]
and let $r>0$.
The existence of the greatest element in the set $D_p(X)\cap [0,r)$ follows from Proposition \ref{zbhfd}.
Hence the inclusion
\(
{\bf B}_X \subseteq {\bf Cs}_X
\)
holds by Theorem~\ref{efhik}. 
\end{proof}

Since every compact metric space is totally bounded, Corollary~\ref{fjqp} implies the following.

\begin{Corollary}
Let $(X,d)\in {\bf UT}$ be compact. Then the inclusion
\begin{equation*}
{\bf B}_X \subseteq {\bf Cs}_X
\end{equation*}
holds. 
\end{Corollary}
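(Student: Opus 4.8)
The plan is to reduce the statement directly to Corollary~\ref{fjqp}, since compactness is a strictly stronger hypothesis than total boundedness. Concretely, I would argue as follows.

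First I would recall the standard fact, already noted in the paper right after Definition~\ref{aghj}, that every compact metric space is totally bounded: given $r>0$, the family $\{B_r(x):x\in X\}$ is an open cover of $X$, and by compactness it admits a finite subcover $\{B_r(x_1),\dots,B_r(x_n)\}$, which is exactly the condition in Definition~\ref{aghj}. Hence, if $(X,d)\in\mathbf{UT}$ is compact, then $(X,d)$ is a totally bounded ${\bf UT}$-space.

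Then I would simply invoke Corollary~\ref{fjqp}, which asserts that for every totally bounded $(X,d)\in\mathbf{UT}$ one has ${\bf B}_X\subseteq{\bf Cs}_X$. Applying this to our space yields the desired inclusion. (Internally, Corollary~\ref{fjqp} draws on Proposition~\ref{zbhfd}, which produces the decreasing null sequence exhausting $D(X)\setminus\{0\}$ and thereby guarantees that each set $D_p(X)\cap[0,r)$ has a greatest element, so that Theorem~\ref{efhik} applies; but none of this needs to be reproved here.)

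There is essentially no obstacle in this argument: the entire mathematical content sits in Theorem~\ref{efhik} and Corollary~\ref{fjqp}, and the only point requiring any care is to avoid re-deriving the total-boundedness machinery — one should just cite the compact $\Rightarrow$ totally bounded implication and chain the two corollaries.
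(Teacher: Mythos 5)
Your argument is correct and coincides with the paper's own: the corollary is deduced exactly by noting that every compact metric space is totally bounded and then invoking Corollary~\ref{fjqp}. Nothing further is needed.
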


The following example shows that the condition $(X,d)\in {\bf UT}$ cannot be omitted in Theorem~\ref{efhik} in the general case.

\begin{Example}[The $p$-adic ultrametric]
\label{nbv}
    Let $(X,d)$ be isometric to $({\mathbb Q}, d_p)$, where $d_p$ is the $p$-adic ultrametric defined in Example~\ref{ex1}  and let 
    \begin{equation*}
        D_q(X):=\{d(q,x)\colon x\in X\}
    \end{equation*}
    for $q\in X$.
    Then using formula~\eqref{ff1}, it is easy to see that the set $D_q(X)\cap[0,r)$ contains the greatest element for all $q\in X$ and $r\in (0,\infty)$. Since the space $({\mathbb Q}, d_p)$ does not contain isolated points, there are no open balls in $({\mathbb Q}, d_p)$ that are centered spheres in this space. Thus the inclusion ${\bf B}_{X}\subseteq {\bf Cs}_{X}$ is false. It should be noted that singletons are centered spheres by Proposition~\ref{sat} but such sets cannot be open balls if the space does not contain isolated points.
\end{Example}

It is interesting to compare Theorem~\ref{efhik} with the following result.

\begin{Theorem}\label{kmhg}
Let $(X,d)$ be an ultrametric space. Then the following statements are equivalent:

\begin{enumerate}[label=\textit{(\roman*)}, left=0pt]
\item The inclusion ${\bf B}_{X} \supseteq {\bf Cs}_{X}$ holds.
\item The ultrametric $d$ is equidistant.
\item The equality ${\bf B}_{X} = {\bf Cs}_{X}$ holds.
\end{enumerate}
\end{Theorem}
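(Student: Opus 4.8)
The plan is to prove the cycle of implications (ii) $\Rightarrow$ (iii) $\Rightarrow$ (i) $\Rightarrow$ (ii), where (iii) $\Rightarrow$ (i) is trivial. The two substantive arguments are (ii) $\Rightarrow$ (iii) and (i) $\Rightarrow$ (ii).

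\textbf{Proof of (ii) $\Rightarrow$ (iii).} Assume $d$ is equidistant with constant $k>0$, so $d(x,y)=k$ for all distinct $x,y\in X$. First I would identify ${\bf B}_X$: for $0<r\leq k$ the open ball $B_r(c)$ is the singleton $\{c\}$, and for $r>k$ it is all of $X$. Hence ${\bf B}_X=\{\{c\}:c\in X\}\cup\{X\}$. Next I would identify ${\bf Cs}_X$ using Definition~\ref{gla83}: a set $S=\{x\in X:d(x,c)=\rho\}\cup\{c\}$ is a singleton when $\rho=0$ (or when $\rho\notin\{0,k\}$, giving $S=\{c\}$ again), and equals $X$ when $\rho=k$. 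So ${\bf Cs}_X=\{\{c\}:c\in X\}\cup\{X\}$ as well (the case $|X|=1$ is handled by noting both sides equal $\{X\}$). Thus ${\bf B}_X={\bf Cs}_X$, which is (iii).

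\textbf{Proof of (i) $\Rightarrow$ (ii).} Assume ${\bf B}_X\supseteq{\bf Cs}_X$; I want to show $d$ is equidistant. If $|X|=1$ the statement is vacuous (or one adopts the convention that a one-point space is equidistant), so assume $|X|\geq 2$. By Proposition~\ref{sat} every singleton is a centered sphere, hence by hypothesis every singleton $\{c\}$ is an open ball; this means every point of $X$ is isolated, i.e. for each $c$ there is $r_c>0$ with $B_{r_c}(c)=\{c\}$. The key step is to produce, for each pair of distinct points, a centered sphere that is forced to be an open ball and thereby pins down distances. Fix distinct $a,b\in X$ and set $\rho:=d(a,b)>0$. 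Consider $S:=\{x\in X:d(x,a)=\rho\}\cup\{a\}$, a centered sphere with center $a$; by hypothesis $S\in{\bf B}_X$, and since $a\in S$, Proposition~\ref{dj} gives $S=B_t(a)$ for some $t>0$. Now I would argue that $t>\rho$: indeed $b\in S$ and $b\neq a$, so $d(a,b)=\rho$ forces $\rho<t$ (a point at distance exactly $\rho$ lies in the open ball of radius $t$ only if $\rho<t$). But then every point $y$ with $d(a,y)<t$ lies in $B_t(a)=S$, so $d(a,y)\in\{0,\rho\}$; in particular there is no point at distance strictly between $0$ and $\rho$ from $a$, and moreover $d(a,y)=\rho$ for every $y\neq a$ with $d(a,y)<t$. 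It remains to rule out points at distance $\geq t$ from $a$: if $d(a,z)\geq t$ for some $z$, pick such a $z$ with, say, $d(a,z)=:\sigma\geq t>\rho$, and repeat the construction with the sphere $S':=\{x:d(x,a)=\sigma\}\cup\{a\}=B_{t'}(a)$; using the ultrametric identity $d(a,y)=\sigma$ whenever $d(a,y)<t'$ and $y\neq a$ and $d(a,b)=\rho<\sigma$, one gets $\rho=\sigma$, a contradiction. Hence $d(a,y)=\rho$ for all $y\neq a$, and since $a,b$ were arbitrary this constant is independent of the pair (any two distances $d(a,b)$ and $d(a,c)$ coincide, and by symmetry all pairwise distances coincide). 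Therefore $d$ is equidistant, which is (ii).

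\textbf{Main obstacle.} The delicate point is (i) $\Rightarrow$ (ii): one must convert the set-theoretic inclusion ${\bf Cs}_X\subseteq{\bf B}_X$ into a rigid statement about distances. The mechanism is that a centered sphere of radius $\rho$ that happens to be an open ball must, by Proposition~\ref{dj}, be a ball centered at \emph{any} of its points, and in an ultrametric space a ball of radius $t$ containing a point at distance exactly $\rho$ forces $\rho<t$; iterating this with spheres of the various radii that occur in $D_a(X)$ collapses $D_a(X)$ to $\{0,\rho\}$ and then, across different base points, to a single nonzero value. I expect the bookkeeping for the ``no larger distances'' step (ruling out $d(a,z)\geq t$) to be where care is needed; alternatively one may streamline it by first observing that the hypothesis forces $D_a(X)\subseteq\{0,d(a,b)\}$ simultaneously for \emph{every} choice of $b\neq a$, which immediately yields $|D_a(X)|=2$ with the unique nonzero value independent of $b$, and then symmetry finishes the argument.
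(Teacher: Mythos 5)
Your proof is correct. Note that the paper does not actually prove Theorem~\ref{kmhg}: it only cites Theorem~4.9 of the reference \cite{DK2024DLPSSAG}, so there is no in-paper argument to compare against. Your proposal therefore supplies something the paper omits, and it does so using only tools already available in the paper. The computation of ${\bf B}_X$ and ${\bf Cs}_X$ for an equidistant space in (ii)~$\Rightarrow$~(iii) is routine and handled correctly, including the degenerate case $|X|=1$. The substantive implication (i)~$\Rightarrow$~(ii) is also sound: the key mechanism --- that a centered sphere $S=\{x: d(x,a)=\rho\}\cup\{a\}$ with $\rho=d(a,b)>0$, once known to be an open ball, equals $B_t(a)$ by Proposition~\ref{dj}, forces $\rho<t$ because $b\in B_t(a)$, and hence confines every distance below $t$ from $a$ to the set $\{0,\rho\}$ --- is exactly the right rigidity argument, and your elimination of distances $\sigma\geq t$ by running the same construction on the sphere of radius $\sigma$ and deriving $\rho=\sigma$ is complete. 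The final passage from ``$D_a(X)=\{0,\rho_a\}$ for each $a$'' to a single global constant via $d(a,a')=\rho_a=\rho_{a'}$ closes the argument. The only cosmetic remark is that the observation about isolated points at the start of (i)~$\Rightarrow$~(ii) is never used and could be dropped.
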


For the proof of Theorem \ref{kmhg} see Theorem~4.9 from \cite{DK2024DLPSSAG}.

\begin{Example} Let $(X,d)$ be an equidistant metric space. Then each non-empty subset $A$ of $X$ is a centered sphere in $(A,d|_{A\times A})$ but any centered sphere in $(X,d)$ is either a singleton or coincides with the set $X$.
\end{Example}

\section{Center of distances, diametrical graphs, and weak similarities}\label{sec5}

Using the concepts of diametrical graphs and star graphs we also can give
the necessary and sufficient conditions under which the equality
\[
C(X)=\{0,\operatorname{diam} X\}
\]
holds for ${\bf UT}$-spaces $(X,d)$.

\begin{Theorem}\label{kgcd}
Let $(X,d) \in {\bf UT}$ have at least two points and let $G_X$ be the diametrical graph of $(X,d)$. Then the following statements are equivalent:

\begin{enumerate}[label=\textit{(\roman*)}, left=0pt]
\item The equality
\begin{equation}
    \label{scg}
C(X)=\{0,\operatorname{diam} X\}
\end{equation}
holds.

\item $G_X$ is a complete multipartite graph such that at least one of its parts is a singleton.

\item $G_X$ has a spanning star subgraph.
\end{enumerate}
\end{Theorem}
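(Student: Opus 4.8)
The plan is to prove the cyclic chain of implications $(i)\Rightarrow(ii)\Rightarrow(iii)\Rightarrow(i)$, using Theorem~\ref{dggkq} to connect the center of distances with membership $\operatorname{diam}X\in D(X)$, and Theorem~\ref{ref} to control the structure of $G_X$. Throughout we may, by definition of $\bf UT$ and Lemma~\ref{lem1}, fix a labeled tree $T=T(l)$ with non-degenerate labeling such that $(X,d)$ is isometric to $(V(T),d_l)$ and $D(V(T))=\{l(u):u\in V(T)\}$; we identify $X$ with $V(T)$.

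For $(i)\Rightarrow(ii)$: By Theorem~\ref{dggkq}, equality \eqref{scg} is equivalent to $\operatorname{diam}X\in D(X)$, so $G_X$ is non-empty and hence, by Theorem~\ref{ref}, a complete multipartite graph. It remains to produce a singleton part. Since $\operatorname{diam}X\in D(X)=\{l(u):u\in V(T)\}$, there is $u^*\in V(T)$ with $l(u^*)=\operatorname{diam}X\ge l(u)$ for all $u$. As in the proof of Theorem~\ref{shak1}, formula \eqref{e11.3} gives $d_l(u^*,u)=l(u^*)=\operatorname{diam}X$ for every $u\ne u^*$, so $u^*$ is adjacent in $G_X$ to every other vertex. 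In a complete multipartite graph a vertex adjacent to all others must lie in a part by itself; hence $\{u^*\}$ is a singleton part.

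For $(ii)\Rightarrow(iii)$: If $G_X$ is complete multipartite with a singleton part $\{u^*\}$, then $u^*$ is adjacent to every vertex of the other parts, i.e.\ to every vertex of $G_X$ distinct from itself. Therefore the spanning subgraph $S$ with $V(S)=V(G_X)$ and $E(S)=\{\{u^*,v\}:v\in V(G_X)\setminus\{u^*\}\}$ is a subgraph of $G_X$, and by Definition~\ref{efhjc} it is a star graph; it is spanning since $V(S)=V(G_X)=X$.

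For $(iii)\Rightarrow(i)$: Suppose $G_X$ has a spanning star subgraph $S$ with center $u^*$. Then $\{u^*,v\}\in E(G_X)$ for every $v\ne u^*$, so by \eqref{ojhr} we have $d(u^*,v)=\operatorname{diam}X$ for all $v\ne u^*$; in particular $\operatorname{diam}X\in D(X)$, and Theorem~\ref{dggkq} yields \eqref{scg}. This closes the cycle, and the main point to get right is the elementary graph-theoretic fact that in a complete multipartite graph the vertices adjacent to everything are exactly those forming singleton parts — everything else follows from the already-established Theorems~\ref{dggkq} and \ref{ref} together with the direct computation $d_l(u^*,u)=l(u^*)$ via \eqref{e11.3}.
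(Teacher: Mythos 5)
Your proof is correct and follows essentially the same route as the paper's: Theorem~\ref{dggkq} reduces \textit{(i)} to the membership $\operatorname{diam}X\in D(X)$, Theorem~\ref{ref} supplies the complete multipartite structure, and the normalized labeling of Lemma~\ref{lem1} yields a vertex $u^*$ with $d_l(u^*,u)=l(u^*)=\operatorname{diam}X$ for all $u\neq u^*$, i.e.\ a vertex adjacent to everything, which must form a singleton part. The only differences are organizational: you prove the single cycle $(i)\Rightarrow(ii)\Rightarrow(iii)\Rightarrow(i)$ and inline the computation via \eqref{e11.3}, whereas the paper proves the equivalences $(i)\Leftrightarrow(ii)$ and $(ii)\Leftrightarrow(iii)$ separately and obtains the distinguished vertex by citing Theorem~\ref{shak1} (the centered-sphere characterization) instead of redoing the labeled-tree argument.
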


\begin{proof}
    $(i) \Rightarrow (ii)$.
Let \eqref{scg} hold. Then the membership relation
\begin{equation}
    \label{kj}
\operatorname{diam} X\in D(X)
\end{equation}
is valid, because $C(X)$ is a subset of $D(X)$.

Definition \ref{dvhj}
and  membership relation \eqref{kj} imply that the graph $G_X$ is non-empty.
Consequently, this graph is complete multipartite by Theorem~\ref{ref}.
By Theorem~\ref{shak1}, equality~\eqref{scg} holds if and only if $X$ is a centered sphere in $(X,d)$ and $\operatorname{diam} X$ is the radius of this sphere.

Let $c$ be the center of the centered sphere $X$.
Then, using \eqref{ijgt} with $S=X$ and $r=\operatorname{diam} X$ we obtain the equality
\begin{equation*}
d(x,c)=\operatorname{diam} X
\end{equation*}
for each $x\in X\setminus\{c\}$.
Hence, by Definition~\ref{dvhj}, we have
\begin{equation*}
\{x,c\}\in E(G_X)
\end{equation*}
for each $x\in X\setminus\{c\}$.
Thus, the singleton $\{c\}$ is a part of $G_X$ by Definition~\ref{scfr}.
Consequently, the implication $(i)\Rightarrow(ii)$ is valid.

$(ii)\Rightarrow(i)$.
Let $(ii)$ hold.
Then $G_X$ is a complete multipartite graph and, consequently, $G_X$ is non-empty by Definition~\ref{scfr}.
Hence we can find two distinct points $x,y\in X$ such that
\begin{equation}
    \label{wswe}
\{x,y\}\in E(G_X).
\end{equation}
By Definition~\ref{dvhj}, membership \eqref{wswe} implies the equality
\[
d(x,y)=\operatorname{diam} X.
\]
Thus we have 
\begin{equation*}
    \operatorname{diam} X\in D(X),
\end{equation*}
 that implies statement $(i)$ by Theorem~\ref{dggkq}.

$(ii)\Rightarrow (iii)$. 
Let a singleton $\{c\}$ be a part of the complete multipartite graph $G_X$.
Then the membership relation
\[
\{x,c\}\in E(G_X)
\]
holds for all $x\in X\setminus\{c\}$ by Definition~\ref{scfr}.
Let $St_X$ be a subgraph of $G_X$ such that
\begin{equation}
    \label{dar}
V(St_X): = X
\end{equation}
and
\begin{equation}
        \label{alph00}
E(St_X):=\{\{x,c\}\colon x\in X\setminus\{c\}\}.
\end{equation}
Then $St_X$ is a star graph by Definition~\ref{efhjc} and, moreover,  $St_X$ is a spanning subtree of $G_X$ by \eqref{dar}.
Thus $(ii)\Rightarrow(iii)$ is a valid implication.

$(iii)\Rightarrow(ii)$.
Let $St_X$ be a spanning star subgraph of the diametrical graph $G_X$. 
Then \eqref{dar}
holds.
Since $(X,d)$ contains at least two points, $St_X$ is non-empty by Definition~\ref{efhjc}.
Thus $G_X$ also is non-empty by inclusion $St_X\subseteq G_X$.
Consequently, $G_X$ is complete multipartite by Theorem~\ref{ref}.
Definition~\ref{efhjc} and equality \eqref{dar} imply that there is $c\in X$ such that \eqref{alph00} holds 
for each $ x\in X\setminus\{c\}.$
Consequently the singleton $\{c\}$ is a part of the complete multipartite graph $G_X$. Thus $(iii)$ implies $(ii)$. 

The proof is completed.
\end{proof}

\begin{figure}[h]
\begin{center}
\begin{tikzpicture}[scale=2]

\coordinate (l1) at (-2.000,0.5);
\coordinate (l3) at (-1.000,0.5);
\coordinate (l2) at (-1.1339746,1.0);
\coordinate (l4) at (-1.1339746,0.0);

\draw[line width=0.8pt] (l1) -- (l2);
\draw[line width=0.8pt] (l1) -- (l3);
\draw[line width=0.8pt] (l1) -- (l4);

\filldraw (l1) circle (1pt);
\filldraw (l2) circle (1pt);
\filldraw (l3) circle (1pt);
\filldraw (l4) circle (1pt);

\node[below left]  at (l1) {$x_1$};
\node[above left]  at (l2) {$x_2$};
\node[above right] at (l3) {$x_3$};
\node[below right] at (l4) {$x_4$};

\node[above left] at (-1.9,1.15) {$St_X$};

\coordinate (x1) at (0,0);
\coordinate (x2) at (0,1);
\coordinate (x3) at (1.5,1);
\coordinate (x4) at (1.5,0);

\draw[line width=0.8pt] (x1)--(x2)--(x3);
\draw[line width=0.8pt] (x1)--(x4);

\draw[line width=0.8pt] (x1)--(x3);
\draw[line width=0.8pt] (x2)--(x4);

\filldraw (x1) circle (1pt);
\filldraw (x2) circle (1pt);
\filldraw (x3) circle (1pt);
\filldraw (x4) circle (1pt);

\node[below left]  at (x1) {$x_1$};
\node[above left]  at (x2) {$x_2$};
\node[above right] at (x3) {$x_3$};
\node[below right] at (x4) {$x_4$};

\node[above left] at (-0.25,1.15) {$G_X$};

\end{tikzpicture}

\caption{A spanning star graph $St_X$ of the diametrical graph $G_X$ of the space $(X,d)$ depicted by Figure~\ref{fig1}.}
\label{fig2}
\end{center}
\end{figure}
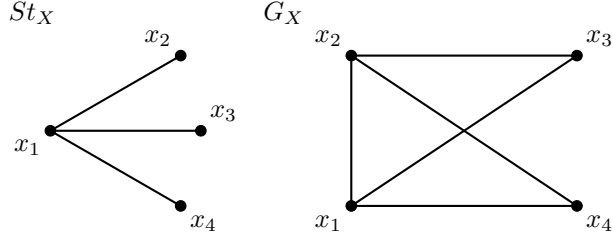

\begin{Remark}
    Analyzing the proof of Theorem~\ref{kgcd} we can see that the equivalence $(ii) \Leftrightarrow (iii)$ is valid for arbitrary complete multipartite graph $G$.
Consequently, Theorem~\ref{kgcd} gives us necessary and sufficient conditions under which complete multipartite graphs have  spanning star subgraphs. In connection with this result, we note that all connected graphs have spanning subtrees. See, for example, Proposition~14 in Chapter~1 of book~\cite{SerreTrees1980}.
\end{Remark}

Theorems \ref{rgjkjs24} and \ref{dggkq} give us the following.

\begin{Corollary}
\label{shak}
Let $(X,d)\in {\bf  UT}$ have at least two points and let $C(X)$ be the center of distances of $(X,d)$.
Then the equality
\begin{equation}
    \label{ks1}
C(X)=\{0\}
\end{equation}
holds if and only if $(X,d)$ is weakly similar to an unbounded ultrametric space.
\end{Corollary}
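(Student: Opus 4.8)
The plan is to derive Corollary~\ref{shak} directly from Theorem~\ref{dggkq} together with Theorem~\ref{rgjkjs24}, with essentially no new work. First I would recall from Theorem~\ref{dggkq} that for a ${\bf UT}$-space $(X,d)$ with $|X|\geq 2$ the center of distances $C(X)$ is either $\{0\}$ or $\{0,\operatorname{diam}X\}$, and that the second alternative holds if and only if $\operatorname{diam}X\in D(X)$. Hence equality~\eqref{ks1} holds if and only if $\operatorname{diam}X\notin D(X)$, simply because $\{0\}$ and $\{0,\operatorname{diam}X\}$ are the only two possibilities and they are distinct exactly when $\operatorname{diam}X\neq 0$, which is guaranteed by $|X|\geq 2$.

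Next I would invoke Theorem~\ref{rgjkjs24}: for any ultrametric space $(X,d)$ with $|X|\geq 2$, the condition $\operatorname{diam}X\notin D(X)$ is equivalent to $(X,d)$ being weakly similar to an unbounded ultrametric space. Since a ${\bf UT}$-space is in particular an ultrametric space with at least two points, Theorem~\ref{rgjkjs24} applies verbatim. Chaining the two equivalences gives: $C(X)=\{0\}$ $\iff$ $\operatorname{diam}X\notin D(X)$ $\iff$ $(X,d)$ is weakly similar to an unbounded ultrametric space, which is precisely the assertion of the corollary.

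I would write this as a short two-direction argument. For the forward direction, assume $C(X)=\{0\}$; then by Theorem~\ref{dggkq} equality~\eqref{jy2} fails (it would force $\operatorname{diam}X\in C(X)$ with $\operatorname{diam}X>0$), hence $\operatorname{diam}X\notin D(X)$, and Theorem~\ref{rgjkjs24} yields weak similarity to an unbounded ultrametric space. For the converse, assume $(X,d)$ is weakly similar to an unbounded ultrametric space; then Theorem~\ref{rgjkjs24} gives $\operatorname{diam}X\notin D(X)$, so the equivalence in Theorem~\ref{dggkq} rules out~\eqref{jy2}, leaving~\eqref{ks1}.

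There is essentially no obstacle here; the only point requiring a line of care is observing that the dichotomy in Theorem~\ref{dggkq} is a genuine dichotomy when $|X|\geq 2$, i.e.\ that $\{0\}\neq\{0,\operatorname{diam}X\}$ because $\operatorname{diam}X>0$ for a space with at least two distinct points. Once that is noted, the corollary is an immediate composition of the two cited theorems, and I would not expect the write-up to exceed a few sentences.
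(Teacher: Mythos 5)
Your proposal is correct and coincides with the paper's own proof: the paper likewise observes that by Theorem~\ref{dggkq} equality~\eqref{ks1} holds if and only if $\operatorname{diam} X \notin D(X)$, and then concludes by Theorem~\ref{rgjkjs24}. Your extra remark that the dichotomy is genuine because $\operatorname{diam} X>0$ when $|X|\geq 2$ is a reasonable point of care, but no new ideas are needed beyond the paper's two-line argument.
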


\begin{proof}
Let $D(X)$ be the distance set of $(X,d)$.
Then, by Theorem~\ref{dggkq}, equality \eqref{ks1} holds iff
\[
\operatorname{diam} X \notin D(X).
\]
It implies the desired result by Theorem~\ref{rgjkjs24}.
\end{proof}

\begin{Remark}
    \label{oiuw}
Example~\ref{ex1} shows that equality \eqref{ks1} may not hold if $(X,d)\notin {\bf UT}$.
\end{Remark}

\section{Conclusion. Expected results}\label{sec6}

Let $(X,d)$ be a metric space.
A {\it closed ball} in $(X,d)$ is a subset $\overline{B}_r(c)$ of the set $X$ defined as
\[
\overline{B}_r(c):=\{x\in X \colon d(c,x)\leq r\},
\]
where $c\in X$ and $r\in\mathbb{R}^+$.

Let us denote by $\overline{\bf B}_X$ the set of all closed balls of $(X,d)$.

The following conjecture is a counterpart of Theorem~\ref{efhik}.

\begin{Conjecture}
    \label{efhik-5}
Let $(X,d)$ be an ultrametric space and let 
\begin{equation*}
D_p(X) := \{ d(p,x) : x \in X \}
\end{equation*}
for each $p\in X$. 
If the inclusion 
\begin{equation}
    \label{ed1-5}
\overline{{\bf B}}_X \subseteq {\bf Cs}_X
\end{equation}
holds, then the set
$D_p(X)\cap [0,r]$ has the greatest element for all $p\in X$ and $r\in \mathbb R^+$. Moreover, if $(X,d)\in {\bf UT}$ and the set $D_p(X)\cap [0,r]$ has the greatest element for all $p\in X$ and $r\in \mathbb R^+$, then inclusion \eqref{ed1-5} holds.
\end{Conjecture}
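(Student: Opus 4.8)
\textbf{Proof proposal for Conjecture~\ref{efhik-5}.}
The plan is to mirror the structure of the proof of Theorem~\ref{efhik}, replacing open balls by closed balls and half-open intervals $[0,r)$ by closed intervals $[0,r]$. For the forward direction, suppose $\overline{{\bf B}}_X \subseteq {\bf Cs}_X$, fix $p\in X$ and $r\in\mathbb{R}^+$, and consider the closed ball $\overline{B}:=\overline{B}_r(p)=\{x\in X\colon d(p,x)\le r\}$. By hypothesis $\overline{B}$ is a centered sphere, so $\overline{B}=\{x\in X\colon d(x,p_1)=r_1\}\cup\{p_1\}$ for some $p_1\in\overline{B}$ and $r_1\in\mathbb{R}^+$. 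Exactly as in Theorem~\ref{efhik}, the strong triangle inequality together with Lemma~\ref{nnhh} (applied with $A=\overline{B}$, $p=p_1$) yields $\operatorname{diam}\overline{B}=r_1$ and $\operatorname{diam}\overline{B}\in D(\overline{B})$, so the diametrical graph $G_{\overline{B}}$ is non-empty, hence complete multipartite by Theorem~\ref{ref}, and therefore there is $x\in\overline{B}$ with $d(p,x)=\operatorname{diam}\overline{B}$. Then $\operatorname{diam}\overline{B}$ is the greatest element of $D_p(X)\cap[0,r]$: it lies in this set, and any $d(p,y)\le r$ satisfies $d(p,y)\le\operatorname{diam}\overline{B}$ by Lemma~\ref{nnhh}. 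This requires only the ultrametric hypothesis, not membership in ${\bf UT}$.

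For the converse, assume $(X,d)\in{\bf UT}$ and that $D_p(X)\cap[0,r]$ has a greatest element for all $p\in X$, $r\in\mathbb{R}^+$; take an arbitrary closed ball $\overline{B}^1=\overline{B}_{r_1}(p_1)$. If $\overline{B}^1$ is a singleton we are done by Proposition~\ref{sat}, so assume it has at least two points. The obstacle here is that Lemma~\ref{scgkk} is stated for \emph{open} balls, so I cannot directly conclude $(\overline{B}^1,d|_{\overline{B}^1\times\overline{B}^1})\in{\bf UT}$. I would address this in one of two ways. First, one may try to establish a closed-ball analogue of Lemma~\ref{scgkk}: a closed ball $\overline{B}_{r_1}(p_1)$ in an ultrametric space is itself a union of open balls, and in the tree model $(V(T),d_l)$ a closed ball is again the vertex set of a subtree $T^1$ of $T$ (one keeps exactly the vertices $u$ with $d_l(p_1,u)\le r_1$, and these form a connected subtree since paths in a tree between two such vertices cannot pass through a vertex of larger label by \eqref{e11.3}), so $(\overline{B}^1,d|_{\overline{B}^1\times\overline{B}^1})$ is isometric to $(V(T^1),d_{l^1})$ and hence lies in ${\bf UT}$. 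Alternatively, one can use the fact that in an ultrametric space every closed ball of positive radius is \emph{also} an open ball whenever its radius is attained as a distance, and handle the remaining case ($r_1\notin D_{p_1}(X)$, so $\overline{B}_{r_1}(p_1)=\bigcup_{s<r_1}\overline{B}_s(p_1)$ coincides with the open ball of any radius in $(\sup\{d(p_1,x):d(p_1,x)\le r_1\},\,r_1]$ — but here the sup is attained by the greatest-element hypothesis, giving a contradiction unless the ball is already handled). The cleanest route is the subtree argument.

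Granting $(\overline{B}^1,d|_{\overline{B}^1\times\overline{B}^1})\in{\bf UT}$, the argument closes verbatim as in Theorem~\ref{efhik}: by Theorems~\ref{dggkq} and~\ref{shak1}, $\overline{B}^1\in{\bf Cs}_{\overline{B}^1}$ if and only if $\operatorname{diam}\overline{B}^1\in D(\overline{B}^1)$; the greatest element $R_1$ of $D_{p_1}(X)\cap[0,r_1]$ satisfies $R_1=\operatorname{diam}\overline{B}^1$ by Lemma~\ref{nnhh}, and $R_1\in D(\overline{B}^1)$ since $R_1\le r_1$ means the realizing point lies in $\overline{B}^1$; hence $\operatorname{diam}\overline{B}^1\in D(\overline{B}^1)$, so $\overline{B}^1\in{\bf Cs}_{\overline{B}^1}$. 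Finally, a closed-ball analogue of Lemma~\ref{dhiu} upgrades this to $\overline{B}^1\in{\bf Cs}_X$; the proof of that analogue is identical to the proof of Lemma~\ref{dhiu}, using Proposition~\ref{dj}'s closed-ball counterpart (in an ultrametric space every point of a closed ball is a center). Since $\overline{B}^1$ was arbitrary, \eqref{ed1-5} follows.

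\textbf{Main obstacle.} The only genuinely new point compared with Theorem~\ref{efhik} is the need for closed-ball versions of Lemma~\ref{scgkk} and Lemma~\ref{dhiu}; the former is the substantive one, and I expect it to follow from the observation that a closed ball in the tree model is the vertex set of a subtree because the path in $T$ between two vertices of the ball stays inside the ball (no intermediate label exceeds the maximum of the two endpoint distances to $p_1$). Everything else is a routine transcription of the open-ball proof with $[0,r)$ replaced by $[0,r]$.
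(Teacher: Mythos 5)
This statement is left as an open conjecture in the paper (Conjecture~\ref{efhik-5}); there is no proof of it anywhere in the text, so there is nothing to compare your argument against. On its own merits, your proposal looks like a correct resolution of the conjecture. The forward direction transcribes the corresponding part of Theorem~\ref{efhik} and goes through unchanged (and, as you note, needs only the ultrametric hypothesis): if $\overline{B}_r(p)$ is a centered sphere with center $p_1$ and radius $r_1$, then $r_1=\operatorname{diam}\overline{B}_r(p)$ is attained in $D(\overline{B}_r(p))$, Theorem~\ref{ref} transports the attainment to the center $p$, and $\operatorname{diam}\overline{B}_r(p)=\sup\bigl(D_p(X)\cap[0,r]\bigr)$ is therefore the greatest element of that set. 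The one substantive new ingredient is the closed-ball analogue of Lemma~\ref{scgkk}, and your subtree argument for it is sound: for $u,v\in\overline{B}_{r_1}(p_1)$ and $w\in V(P_{u,v})$ one has $d_l(u,w)\leq d_l(u,v)\leq r_1$ because $P_{u,w}\subseteq P_{u,v}$, hence $d_l(p_1,w)\leq\max\{d_l(p_1,u),d_l(u,w)\}\leq r_1$; so the closed ball spans a subtree, the unique $T$-path between any two of its vertices stays inside it, and the restricted labeling is still non-degenerate and generates the restricted ultrametric. The closed-ball versions of Proposition~\ref{dj} and Lemma~\ref{dhiu} are routine, and the latter is in fact slightly easier than the open-ball case, since $r_2\leq r_1$ suffices where the open-ball proof needed strict inequality.

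Two points to tighten when writing this up. First, handle singleton closed balls separately in the forward direction: the diametrical-graph step presupposes at least two points, although the claim is trivial for singletons via Proposition~\ref{sat}. Second, drop the ``alternative'' route you sketch for the converse: it is not true that a closed ball whose radius is attained is also an open ball (take $D_{p_1}(X)=\{0,1\}\cup\{1+1/n : n\geq 1\}$, where $\overline{B}_1(p_1)$ is not an open ball), and the case split you describe there is muddled; only the case $r_1\notin D_{p_1}(X)$ reduces to an open ball. Since you correctly identify the subtree argument as the route to take, this does not damage the proof, but the flawed alternative should not appear in a final write-up.
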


Let $(X_3,d)$ be the three-point ultrametric space depicted in Figure~\ref{fig3} below.

\begin{figure}[h]
\begin{center}
\begin{tikzpicture}[scale=2]

\coordinate (a) at (0.3,0);
\coordinate (b) at (1.3,0);
\coordinate (c) at (0.8,1.32); 

\draw[line width=0.8pt] (a) -- (b);
\draw[line width=0.8pt] (b) -- (c);
\draw[line width=0.8pt] (c) -- (a);

\filldraw (a) circle (1pt);
\filldraw (b) circle (1pt);
\filldraw (c) circle (1pt);

\node[below left]  at (a) {$x_1$};
\node[above]       at (c) {$x_2$};
\node[below right] at (b) {$x_3$};

\node[yshift=-6pt] at ($(a)!0.5!(b)$) {$1$};
\node[xshift=8pt,yshift=6pt] at ($(b)!0.5!(c)$) {$2$};
\node[xshift=-8pt,yshift=6pt] at ($(c)!0.5!(a)$) {$2$};

\node[above right] at (1.55,1.15) {$(X_3,d)$};

\end{tikzpicture}

\caption{A non-equidistant three-point ultrametric space $(X_3,d)$.}
\label{fig3}
\end{center}
\end{figure}
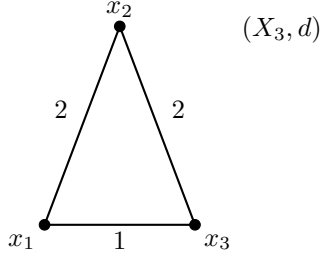

Then it is easy to show that every non-empty subset of $X_3$ is a centered sphere in $(X_3,d)$.

\begin{Conjecture}\label{hol}
Let $(Y,\rho)$ be an ultrametric space such that $|Y|\geq 3$. Then the following statements are equivalent:
\begin{enumerate}[label=\textit{(\roman*)}, left=0pt]
\item Each non-empty subset of the set $Y$ is a centered sphere in $(Y,\rho)$.
\item The set $Y$ contains exactly three points and $(Y,\rho)$ is weakly similar to $(X_3,d)$.
\end{enumerate}
\end{Conjecture}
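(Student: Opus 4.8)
The plan is to establish the equivalence by proving both implications, with the substantive work concentrated in $(i)\Rightarrow(ii)$. For $(ii)\Rightarrow(i)$, I would first verify directly that every non-empty subset of $(X_3,d)$ is a centered sphere: singletons are handled by Proposition~\ref{sat}, the two-point subsets $\{x_1,x_3\}$, $\{x_1,x_2\}$, $\{x_2,x_3\}$ are trivially centered spheres (any two-point set is), and for $X_3$ itself one checks that $x_2$ serves as a center with radius $2$, since $\rho(x_2,x_1)=\rho(x_2,x_3)=2=\operatorname{diam}X_3$. Then I would show that the property ``every non-empty subset is a centered sphere in itself'' is invariant under weak similarity. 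This is because a weak similarity $\Phi\colon Y\to X_3$ with associated strictly increasing bijection $f$ carries a subset $A\subseteq Y$ to $\Phi(A)\subseteq X_3$, and the defining equation $d(x,c)=r$ in $\Phi(A)$ pulls back, via $f^{-1}$, to the equation $\rho(y,\Phi^{-1}(c))=f^{-1}(r)$ in $A$; thus centered spheres correspond to centered spheres under $\Phi^{-1}$. Combining these two facts gives $(ii)\Rightarrow(i)$.

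For $(i)\Rightarrow(ii)$, suppose every non-empty subset of $(Y,\rho)$ is a centered sphere in itself. The first key step is to bound $|Y|$. I would argue that $(i)$ forces $|Y|=3$: if $|Y|\geq 4$, pick a subset $A$ of $Y$ with exactly $4$ points; $A$ with the induced metric is a four-point ultrametric space, and by Theorem~\ref{ref} its diametrical graph $G_A$ is complete multipartite. A finite ultrametric space is a centered sphere in itself (with radius $\operatorname{diam}$, by Lemma~\ref{nnhh} and the argument in Theorem~\ref{shak1}) precisely when $G_A$ has an isolated-type part that is a singleton, i.e. when some point is at distance $\operatorname{diam}A$ from all others. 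I would show that among four-point ultrametric spaces one can always find such an $A$ violating this — concretely, if $|Y|\geq 4$ one can choose four points whose induced space has its two largest distances not meeting at a common vertex, so that no point realizes $\operatorname{diam}A$ with all three others, contradicting that $A$ is a centered sphere. Once $|Y|=3$ is established, the remaining task is to classify three-point ultrametric spaces $(Y,\rho)$ all of whose subsets are centered spheres and show each is weakly similar to $(X_3,d)$. A three-point ultrametric space has distance set of the form $\{0,a,a\}$ (all three pairwise distances equal, the equidistant case) or $\{0,a,b\}$ with $a<b$ appearing with multiplicities $1$ and $2$ respectively — the ``isosceles'' shape forced by the strong triangle inequality. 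In the equidistant case, $Y$ itself as a centered sphere has center any point and radius $a$; this is fine, but I need to check whether the equidistant three-point space is weakly similar to $(X_3,d)$ — it is \emph{not}, since $D(Y)=\{0,a\}$ has two elements whereas $D(X_3)=\{0,1,2\}$ has three, and weak similarity preserves $|D(\cdot)|$. So I must rule out the equidistant case: here I would exhibit a subset that fails to be a centered sphere — but every subset of an equidistant space \emph{is} a centered sphere, which seems to contradict the claimed equivalence. This tension is exactly the main obstacle (see below); resolving it likely requires that the intended reading of $(i)$ is stronger, or that ``weakly similar to $(X_3,d)$'' in $(ii)$ is meant to also capture the degenerate equidistant case as a limit, or the statement should read ``$(Y,\rho)$ is weakly similar to $(X_3,d)$ or is equidistant.'' Assuming the non-equidistant reading, once $D(Y)=\{0,a,b\}$ with the correct multiplicities, the strictly increasing bijection $f\colon\{0,1,2\}\to\{0,a,b\}$ sending $1\mapsto a$, $2\mapsto b$ together with the obvious vertex identification is a weak similarity $Y\to X_3$, completing $(i)\Rightarrow(ii)$.

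The hard part will be the case analysis bounding $|Y|$ by $3$, and in particular making precise the combinatorial claim that in any ultrametric space with at least four points one can extract a four-point subspace that is not a centered sphere in itself. This requires understanding which four-point ultrametric ``shapes'' fail the centered-sphere condition — the ones whose diametrical graph is complete bipartite with both parts of size $2$, or complete tripartite with parts $2,1,1$ where the unique singleton is not the only candidate — and then arguing that whenever $|Y|\geq 4$, at least one such bad configuration is present as a subspace. A clean way to organize this is: if $Y$ is not equidistant, let $b=\operatorname{diam}Y$ and note that by the ultrametric structure the relation ``$\rho(x,y)<b$'' is an equivalence on $Y$; if there are $\geq 3$ classes, pick one point from three of them plus one more to get a bad $4$-set; if there are exactly $2$ classes and one has $\geq 2$ points while $Y$ is not a centered sphere, or both have $\geq 2$ points, again extract a bad $4$-set; the only surviving configurations are three-point spaces. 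Carefully enumerating these finitely many cases is routine but tedious, and is where the real length of the proof lies.
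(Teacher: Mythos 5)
The statement you are proving is Conjecture~\ref{hol}; the paper offers no proof of it, so there is nothing to compare your attempt against. Judged on its own, your proposal has a genuine gap, and it originates in a misreading of Definition~\ref{gla83}. A set $S\subseteq Y$ is a centered sphere \emph{in $(Y,\rho)$} only if $S=\{x\in Y:\rho(x,c)=r\}\cup\{c\}$ with $x$ ranging over the \emph{whole} space $Y$; this is strictly stronger than being a centered sphere in $(S,\rho|_{S\times S})$, a distinction the paper stresses twice (the example after Figure~\ref{fig1} and the example following Theorem~\ref{kmhg}). Your argument systematically substitutes the weaker relative notion. Hence the claim that ``any two-point set is trivially a centered sphere'' is false under the intended definition: $\{a,b\}$ is a centered sphere in $(Y,\rho)$ only if one of $a,b$ is the \emph{unique} point of $Y$ at distance $\rho(a,b)$ from the other. (It happens to hold for the three two-point subsets of $X_3$, but that must be checked, not asserted.) More importantly, the ``tension'' you identify in the equidistant case is illusory: by the paper's example after Theorem~\ref{kmhg}, in an equidistant space the only centered spheres in the ambient space are singletons and $Y$ itself, so any two-point subset of an equidistant $Y$ with $|Y|\geq 3$ already violates~(i). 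No amendment of the conjecture is needed; your proposed weakenings of the statement stem from the definitional confusion.

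The same confusion breaks your strategy for bounding $|Y|\leq 3$. You propose to extract a four-point subspace that is not a centered sphere \emph{in itself}; but in an equidistant space with, say, five points, \emph{every} subset is a centered sphere in itself, so no such subspace exists, yet~(i) still fails (via two-point subsets, under the correct definition). The argument must instead exhibit, for every ultrametric space with $|Y|\geq 4$ and for every non-equidistant space that is not of the $X_3$ shape, some subset $S$ that is not the full metric sphere (plus center) about any of its points --- two-point and three-point subsets are the natural candidates, since for $u\neq v$ the set $\{u,v\}$ is a centered sphere in $(Y,\rho)$ iff the sphere of radius $\rho(u,v)$ about $u$ or about $v$ is a singleton. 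That combinatorial core, which is the entire content of $(i)\Rightarrow(ii)$, is left in your proposal as an unexecuted and, as sketched, incorrectly targeted case analysis. The $(ii)\Rightarrow(i)$ direction (direct verification for $X_3$ plus invariance of the centered-sphere property under weak similarity) is essentially sound once the two-point subsets of $X_3$ are checked honestly, with the minor remark that a radius $r>0$ not in $D(Y)$ yields only singletons, so one may always take $r\in D(Y)\cup\{0\}$ before transporting it along the scaling bijection $f$.
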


The following conjecture was motivated by some calculations with \(p\)-adic ultrametric with \(p=2\) (see Example~\ref{ex1}).

\begin{Conjecture}\label{con3}
Let $n \ge 1$ be an integer number, 
let $\log_2(n)$ be the binary logarithm of $n$, and let $\lfloor \log_2(n) \rfloor$ be the integer part of $\log_2(n)$.
Then the inequality
\begin{equation*}
|C(X)| \leq 1 + \lfloor \log_2 (n) \rfloor
\end{equation*}
holds for each ultrametric space $(X,d)$ with $|X|=n$.
Moreover, there exists an ultrametric space $(Y,\rho)$ such that $|Y|=n$
and
\[
|C(Y)| = 1 + \lfloor \log_2(n) \rfloor .
\]
\end{Conjecture}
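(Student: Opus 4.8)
The plan is to establish the two assertions separately: first the inequality $|C(X)|\le 1+\lfloor\log_2 n\rfloor$ for \emph{every} finite ultrametric space $(X,d)$ with $|X|=n$ (so the bound, unlike the extremal example, will not involve labeled trees at all), and then its sharpness by an explicit construction.

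For the upper bound I would argue by strong induction on $n=|X|$; the case $n=1$ is trivial since then $C(X)=\{0\}$. For the inductive step write the center of distances as $C(X)=\{t_0,t_1,\dots,t_k\}$ with $0=t_0<t_1<\dots<t_k$ (it is finite because $C(X)\subseteq D(X)$); if $k=0$ there is nothing to prove, so assume $k\ge 1$. Since $t_k\in C(X)\subseteq D(X)$, some pair of points lies at distance $t_k>t_{k-1}$, so the relation $x\sim y\iff d(x,y)\le t_{k-1}$ — an equivalence relation, transitivity being the strong triangle inequality — has at least two classes $B_1,\dots,B_m$ (the closed balls of radius $t_{k-1}$), $m\ge 2$, and each $B_j$ is a proper subset, hence $|B_j|<n$. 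The crux is the claim that $\{t_0,\dots,t_{k-1}\}\subseteq C(B_j)$ for every $j$: given $p\in B_j$ and $0\le i\le k-1$, the membership $t_i\in C(X)$ yields $x\in X$ with $d(p,x)=t_i\le t_{k-1}$, hence $x\sim p$, i.e.\ $x\in B_j$; thus $t_i\in D_p(B_j)$ for every $p\in B_j$, so $t_i\in C(B_j)$ by Proposition~\ref{ugrdd}. Consequently $|C(B_j)|\ge k$, and the induction hypothesis gives $k\le|C(B_j)|\le 1+\lfloor\log_2|B_j|\rfloor$, i.e.\ $|B_j|\ge 2^{k-1}$. Summing, $n=\sum_{j=1}^m|B_j|\ge m\cdot 2^{k-1}\ge 2^k$, whence $\lfloor\log_2 n\rfloor\ge k$ and $|C(X)|=k+1\le 1+\lfloor\log_2 n\rfloor$.

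For sharpness, set $k=\lfloor\log_2 n\rfloor$ and first build a $2^k$-point ultrametric space $A_k$ with $C(A_k)=\{0,1,\dots,k\}$ by recursive doubling: $A_0$ is a single point, and $A_{j+1}$ is the disjoint union of two isometric copies of $A_j$ with every cross-distance set equal to $j+1$. Since $\operatorname{diam}A_j=j<j+1$ one has $D_p(A_{j+1})=D_p(A_j)\cup\{j+1\}$ for every $p$, and then the elementary identity $(S\cup\{j+1\})\cap(S'\cup\{j+1\})=(S\cap S')\cup\{j+1\}$ together with Proposition~\ref{ugrdd} gives $C(A_{j+1})=C(A_j)\cup\{j+1\}$; by induction $C(A_k)=\{0,1,\dots,k\}$ and $|A_k|=2^k$. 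To reach exactly $n=2^k+r$ points with $0\le r<2^k$, fix $x_0\in A_k$ and adjoin new points $q_1,\dots,q_r$, declaring $\rho(q_i,q_{i'})=\rho(q_i,x_0)=1$ for $i\ne i'$ and $\rho(q_i,y)=d(x_0,y)$ for $y\in A_k\setminus\{x_0\}$ — that is, blowing $x_0$ up into an equidistant cluster. Here the minimum nonzero distance of $A_k$ is $1$, which makes the strong triangle inequality straightforward to verify for $\rho$; moreover each $q_i$ is a ``twin'' of $x_0$, so $D_p(Y)=D_p(A_k)$ for every old $p$ and $D_{q_i}(Y)=D_{x_0}(A_k)$ for every $i$. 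Hence, by Proposition~\ref{ugrdd}, $C(Y)=C(A_k)\cap D_{x_0}(A_k)=C(A_k)$, so $|Y|=n$ and $|C(Y)|=k+1=1+\lfloor\log_2 n\rfloor$, as required.

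The ultrametric verifications and the distance-set bookkeeping in the construction are routine. The genuinely load-bearing idea is the inductive step of the upper bound: splitting $X$ along the closed balls whose radius is the \emph{second largest} element of $C(X)$, and observing that each such ball keeps all of $C(X)$ except its top element; this forces the number of points to at least double with each element of $C(X)$ beyond $0$, which is precisely where the binary logarithm comes from. The only places to be careful are that $m\ge 2$ always holds and that the claim $\{t_0,\dots,t_{k-1}\}\subseteq C(B_j)$ degenerates harmlessly to $\{0\}\subseteq C(B_j)$ when $k=1$.
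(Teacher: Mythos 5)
This statement is left as an open conjecture in the paper: the authors give no proof, only the closing heuristic about refining a partition with $k\ge 2$ parts into a bipartition whose smaller part is no smaller than the smallest original part. Your proposal, as far as I can check, is a complete and correct proof of both halves, so it goes strictly beyond what the paper establishes. Your inductive step is sound: the relation $d(x,y)\le t_{k-1}$ is an equivalence by the strong triangle inequality, the existence of a pair at distance $t_k>t_{k-1}$ forces at least two classes, and the claim $\{t_0,\dots,t_{k-1}\}\subseteq C(B_j)$ is correct — note that the membership $t_i\in D(B_j)$ required by Definition~\ref{zcgh572} comes for free from the same argument, since the witness $x$ with $d(p,x)=t_i$ lands in $B_j$. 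This yields $|B_j|\ge 2^{k-1}$ for each of the $m\ge 2$ classes and hence $n\ge 2^k$, which is exactly a rigorous, top-down implementation of the doubling phenomenon the authors gesture at. The extremal construction also checks out: one verifies by induction that $D_p(A_j)=\{0,1,\dots,j\}$ for \emph{every} $p\in A_j$ (not just that $C(A_j)=\{0,\dots,j\}$), which is what makes the twin-adjunction step harmless, since the new distances $1$ and $d(x_0,y)$ already lie in every $D_p$; the ultrametric verification for $\rho$ uses that the minimum nonzero distance of $A_k$ is $1$, which holds because $D(A_k)=\{0,1,\dots,k\}$. The only degenerate cases ($k=0$, or $t_{k-1}=0$ so that the classes are singletons) behave exactly as you say. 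If you write this up, spell out the set identity $\bigcap_i(S_i\cup T)=(\bigcap_i S_i)\cup T$ used for $C(A_{j+1})=C(A_j)\cup\{j+1\}$ and the case analysis of triples in the twin construction, but I see no gap.
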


To see why a binary logarithm bound is expected in Conjecture~\ref{con3}, we only note that the following simple statement is valid.

Let \(\{X_1, X_2, \dots, X_k\}\) be a partition of a finite set  \(X\) on disjoint subsets \(X_1, X_2, \dots, X_k\) with \(k \geq 2\).
Then there is a partition \(\{X_1^*, X_2^*\}\) of \(X\) such that
\[
\min\{|X_1^*|, |X_2^*|\} \geq \min\{|X_1|, \dots, |X_k|\}.
\]

\hfill

\section*{Funding}

The first author was supported by grant 367319 of the Research Council of Finland.

\bibliographystyle{unsrt} 
\bibliography{bib2020.07}

\end{document}